\documentclass{amsart}
\usepackage{graphicx} 
\usepackage{amsmath,amsfonts,amssymb,amsthm}
\usepackage[english]{babel}
\usepackage{tikz}
\tikzset{
  dots/.style={line width=1pt, line cap=round, dash pattern=on 0pt off 5\pgflinewidth}
}

\usepackage[backend=biber]{biblatex}
\usepackage{tkz-tab}

\bibliography{biblio}

\newtheorem{thm}{Theorem}
\newtheorem{rem}[thm]{Remark}

\newtheorem{lem}[thm]{Lemme}

\newtheorem{prop}[thm]{Proposition}

\usepackage{graphicx} 

\title{Almost-invariant half-spaces of operators on $\omega$}
\author{Noémie Fougnies}

\address{Noémie Fougnies, Département de Mathématique, Université de Mons, 20 Place du Parc, 7000 Mons, BELGIUM}
\email{noemie.fougnies@umons.ac.be}

\date{}

\thanks{The author was partially supported by the F.R.S-FNRS through funding PDR No. T.0285.26.}

\begin{document}

\begin{abstract}
We study the Almost-Invariant Subspace Problem for operators defined on non-normable Fréchet spaces and especially on $\omega$.
This problem is stated as follows: "Given a bounded operator $T$ on an infinite-dimensional complex Fréchet space $X$, can we always find a closed almost-invariant half-space?". 
In this paper we solve the problem for the space $\omega$ by showing that an operator $T$ on $\omega$ possesses a closed almost-invariant half-space if and only if $T$ is conjugate to $F + R + \lambda \operatorname{Id}$ or to $B + R + \lambda \operatorname{Id}$ where $F$ is the Forward shift and $B$ is the Backward shift and where $R$ is a finite rank operator and $\lambda \in \mathbb{C}$. 
We also investigate the case of operators defined on $X \oplus \omega$ where $X$ is an infinite-dimensional Fréchet space with a continuous norm and we show that in this case, every operator defined on $X \oplus \omega$ possesses a closed almost-invariant half-space.
\end{abstract}

\maketitle

\section{Introduction}

The Invariant Subspace Problem is a well-known problem in operator theory.
Given $X$ a Banach space, we are interested to know if an operator $T \in L(X)$ possesses a non-trivial closed invariant subspace, where a closed subspace $M$ is said to be a non-trivial invariant subspace of $T$ if $T(M) \subseteq M$ and  $M \neq \{0\}$ and $X$.
This problem was first addressed for operators defined on Hilbert spaces.
Indeed, the problem seems to appear after Beurling characterizes all the closed invariant subspaces for a certain operator defined on a Hilbert space \cite{beurling1949two}.
The problem also begins with the unpublished work of von Neumann, who in 1949 proved that every compact operator defined on a Hilbert space has a non-trivial invariant subspace. 
Thanks to Lomonosov and his famous theorem proved in 1973, we even have that every operator commuting with a compact operator has a non-trivial closed invariant subspace \cite{lomonosov1973invariant}. 
Moreover, another significant breakthrough was demonstrated in 2011 by Argyros and Haydon who showed that there exists a separable Banach space of infinite dimension such that every bounded operator defined on this space has a closed invariant subspace \cite{argyros2011hereditarily}. 

On the other hand, we also know that there exist some counterexamples of this problem.
Indeed, Enflo and Read demonstrated that there exists an operator on separable infinite-dimensional Banach spaces that does not possess a non-trivial closed invariant subspace \cite{enflo1987invariant, read1984solution}. 
A classical counterexample, shown by Read, is that there exists an operator on $\ell^1$ without any non-trivial closed invariant subspace \cite{read1985solution}. 

We can now ask ourselves, for example in the case of $\ell^1$, how far some operator is from having a closed non-trivial invariant subspace.
Given a bounded operator $T$ on an infinite-dimensional Banach space $X$, can we always find a finite rank perturbation of it that possesses a closed non-trivial invariant subspace?
If $M$ is an invariant subspace under the action of a finite rank perturbation of $T$, we will say that $M$ is almost-invariant for $T$.
However, each subspace of $X$ that is finite-dimensional or of finite codimension is always almost-invariant for $T$.
For this reason, we may wonder if, given an operator $T$, there always exists a closed almost-invariant subspace of infinite dimension and infinite codimension. 
A subspace of infinite dimension and infinite codimension is called a half-space.
So we get the Almost-Invariant Half-space Problem: "Given a bounded operator $T$ on an infinite-dimensional complex Banach space $X$, can we always find a closed almost-invariant half-space?". 
This problem was first introduced by Androulakis, Popov, Tcaciuc and Troitsky in 2009 \cite{androulakis2009almost} and was completely solved in 2019 by Tcaciuc who proved that every bounded operator on an infinite-dimensional complex Banach space admits a closed almost-invariant half-space \cite{tcaciuc2019invariant}.

Many topological vector spaces, which play an important role in functional analysis, have a topology that cannot be defined from a norm but rather by a family of seminorms.
This is the case, for example, for the vector space of holomorphic functions $H(\mathbb{C})$ or for $\omega = \mathbb{C}^{\mathbb{Z}_+}$, the space of all complex sequences where $\mathbb{Z}_{+}$ is the set of non-negative integers.
These spaces are actually Fréchet spaces. 
Fréchet spaces generalize Banach spaces by having a topology given by an increasing and separating sequence $(p_j)_{j \geqslant 0}$ of seminorms.

The Invariant Subspace Problem has also been investigated in the context of Fréchet spaces. We know for instance that the space of holomorphic functions $H(\mathbb{C})$ admits an operator without non-trivial closed invariant subspace \cite{golinski2013operators}, while every operator on $\omega$ possesses a non-trivial closed invariant subspace \cite{korber2013die}.

The goal of this paper is to investigate the Almost-Invariant Subspace Problem for non-normable Fréchet spaces. 
In this paper, we will focus on Fréchet spaces endowed with an increasing and separating sequence of seminorms $(p_j)_{j\geqslant 0}$ such that for every $j\geqslant 0$, $\ker p_j$ is an infinite-dimensional subspace and $\ker p_{j+1}$ is of finite codimension in $\ker p_j$. In other words, we will look at Fréchet spaces isomorphic to $X \oplus \omega$ where $X$ is a Fréchet space with a continuous norm and where the space $\omega$ is endowed with the sequence of seminorms $(p_j)_{j\geqslant 0}$ given by $p_j(x) = \max_{n \leqslant j} |x_n|$ for $j \geqslant 0$ and $x \in \omega$.


While the answer to the Almost-Invariant Subspace Problem is positive for any bounded operator on any infinite-dimensional complex Banach space, it will not be the case for any bounded operator on any infinite-dimensional complex Fréchet space.
Indeed, we will show that the Forward shift $F$ and the Backward shift $B$ does not possess a closed almost-invariant half-space on $\omega$. We recall that these operator are defined by
$$
F(x_0,x_1,x_2,\ldots) = (0,x_0,x_1,\ldots) \quad \text{ and } \quad B(x_0,x_1,x_2,\ldots) = (x_1,x_2,x_3,\ldots).
$$
Recall that every operator on $\omega$ possesses a non-trivial closed invariant subspace. Unlike the case of operators on any infinite-dimensional complex Banach spaces, it is thus possible on some Fréchet spaces to find operators with non-trivial closed invariant subspaces but no closed almost-invariant half-spaces.

In fact, the existence of non-trivial closed invariant subspaces also holds for any bounded operator on $X\oplus \omega$ where $X$ is a Fréchet space with a continuous norm~\cite{menet2018invariant}, while the existence of closed almost-invariant half-spaces will depend on the dimension of $X$.

\begin{thm}\label{thmX}
Let $X$ be a Fréchet space with a continuous norm. 
Then, each operator defined on $X \oplus \omega$ possesses a closed almost-invariant half-space if and only if $X$ is infinite-dimensional.
\end{thm}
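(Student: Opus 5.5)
The statement has two directions, and I will treat them separately.

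\emph{The case $X$ finite-dimensional.} Here $X \oplus \omega \cong \omega$, since a finite-dimensional space with a continuous norm is $\mathbb{C}^d$ and $\mathbb{C}^d \oplus \omega \cong \omega$. So it suffices to exhibit one operator on $\omega$ with no closed almost-invariant half-space, and the Forward shift $F$ is the natural choice. The key observation is that closed subspaces of $\omega$ are very rigid. Every closed subspace $M$ of $\omega$ is the annihilator of a subspace of $\omega' = \varphi$, the finitely supported sequences. Moreover, $M$ is a half-space if and only if this annihilator $N = M^\perp \subseteq \varphi$ is infinite-dimensional and of infinite codimension in $\varphi$.

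Suppose $(F+K)M \subseteq M$ with $K$ of finite rank. Then $N$ is invariant under $(F+K)' = B' + K'$, up to a finite-dimensional error; here the transpose of $F$ acts on $\varphi$ as the backward shift. I would show that a subspace $N$ of $\varphi$ that is almost-invariant under the backward shift is either finite-dimensional or of finite codimension. The mechanism is degree reduction. Write elements of $\varphi$ as polynomials. If $N$ is infinite-dimensional, it contains elements of arbitrarily high degree. Applying the backward shift repeatedly lowers the degree by one each time, so modulo a finite-dimensional piece $N$ should contain elements of every sufficiently large degree, and hence be of finite codimension.

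For $B$ one argues dually: $B' = F$ acts on $\varphi$ as multiplication by $z$. A subspace of $\varphi = \mathbb{C}[z]$ invariant under multiplication by $z$ is an ideal $p\,\mathbb{C}[z]$, which has finite codimension or is $0$. The finite-rank perturbation has to be handled by passing to a finite-codimensional subspace of $N$ on which $K'$ vanishes.

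The main obstacle in this direction is controlling the finite-rank perturbation carefully in both cases. For $F$, I would argue with the degree filtration and show $\dim(\varphi_n/(N\cap\varphi_n))$ stays bounded, where $\varphi_n$ denotes the polynomials of degree at most $n$.

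\emph{The case $X$ infinite-dimensional.} Let $T$ be an operator on $X \oplus \omega$ and write it as a $2\times 2$ operator matrix
\[
T = \begin{pmatrix} A & C \\ D & E \end{pmatrix}.
\]
Since $X$ has a continuous norm and $\omega$ has none, the continuous linear map $C\colon \omega\to X$ must have finite rank. Indeed, a continuous norm on $X$ composed with $C$ is a continuous seminorm on $\omega$, so it is dominated by some $p_j$, and hence $C$ vanishes on $\ker p_j$, which has finite codimension.

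Thus, up to a finite-rank perturbation, $T$ is block lower-triangular, and $\{0\}\oplus \omega$ is invariant. Since $\omega$ is infinite-dimensional and, because $X$ is infinite-dimensional, of infinite codimension, the space $\{0\}\oplus\omega$ is a closed half-space. It is invariant under $T$ minus the finite-rank operator $\begin{pmatrix}0&C\\0&0\end{pmatrix}$, so it is almost-invariant for $T$, which finishes this direction.
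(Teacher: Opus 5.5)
Your proof is correct. For $X$ infinite-dimensional it is the paper's proof of Theorem~\ref{X + omega}: the corner $\omega\to X$ has finite rank because a continuous norm on $X$ composed with it is dominated by some $p_j$, so $\{0\}\oplus\omega$ is invariant under a finite-rank perturbation of $T$.

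For $X$ finite-dimensional you take a genuinely different route. The paper stays inside $\omega$ and works with valuations (Propositions~\ref{propF} and~\ref{PropB}). It shows that a closed infinite-dimensional subspace invariant under $F+R$ or $B+R$ contains vectors of every large valuation, hence contains some $M_n$, hence has finite codimension. You dualize instead. You use $\omega'=\varphi\cong\mathbb{C}[z]$, the bipolar theorem, and the fact that $M$ is a half-space iff $M^\perp$ is infinite-dimensional and of infinite codimension in $\varphi$. You then prove that an infinite-dimensional subspace of $\mathbb{C}[z]$ invariant under $F'+K'$ has finite codimension, so a closed invariant $M$ of infinite codimension is finite-dimensional. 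Under this duality, your degree-lowering argument for $F$ is the mirror image of the paper's valuation-lowering argument for $B$, and vice versa.

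What your version buys: the problem becomes purely algebraic, and the perturbation is controlled for free. Since $\operatorname{Im}K'$ is a finite-dimensional subspace of $\varphi$, it lies in some $\varphi_m$, so no analogue of Proposition~\ref{rang fini,colonnes} is needed. What the paper's version buys: it avoids duality theory and uses the same valuation and triangular-matrix language that drives the hard implication of Theorem~\ref{mainthm}. It also covers all of $F+R+\lambda\operatorname{Id}$, $B+R+\lambda\operatorname{Id}$ and their conjugates, which Theorem~\ref{thmX} does not need.

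A few points to tighten.
\begin{itemize}
\item $N=M^\perp$ is \emph{exactly} invariant under $(F+K)'=F'+K'$, not just up to a finite-dimensional error.
\item It is $F'$, not $B'$, that acts as the backward shift on $\varphi$.
\item Your degree step should state explicitly that $(F'+K')f$ has degree exactly $\deg f-1$ whenever $\deg f>m+1$.
\end{itemize}

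Your treatment of $B$ is superfluous, since one operator suffices. Its stated mechanism also does not work as written. $N\cap\ker K'$ is in general not invariant under multiplication by $z$, so you cannot invoke the ideal structure of $\mathbb{C}[z]$. What does work is the same degree argument: $zf+K'f$ has degree $\deg f+1$ whenever $\deg f\geq m$.
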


This result will easily follow from our study of $\omega$ and will be proved in the last section of this paper.

Given the absence of closed almost-invariant half-spaces for the shifts $F$ and $B$ on $\omega$, one wonders if there are other examples of operators on $\omega$ that do not admit closed almost-invariant half-spaces. We can actually fully characterize these operators, and this leads us to the main result of this paper, which can be stated as follows:


\begin{thm} \label{mainthm}
Let $T \in L(\omega)$. 
The following statements are equivalent.
\begin{enumerate}
    \item $T$ does not possess a closed almost-invariant half-space;
    \item $T$ does not possess a closed invariant half-space;
    \item There exist $\lambda \in \mathbb{C}$ and an operator $R \in L(\omega)$ of finite rank such that $T$ is conjugate to $B+ \lambda \operatorname{Id}  + R$ or to $F+ \lambda \operatorname{Id}+ R$,
\end{enumerate}
where $F$ is the Forward shift and $B$ is the Backward shift.
\end{thm}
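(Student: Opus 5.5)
The plan is to dualize and turn the statement into pure linear algebra. Recall that $\omega'=\varphi$ is the space of finitely supported sequences, of countable dimension, and that every operator on $\omega$ is given by a row-finite matrix. Hence $T\mapsto T'$ is a bijection between $L(\omega)$ and the linear maps of $\varphi$. Moreover, $M\mapsto M^\perp$ is an inclusion-reversing bijection between closed subspaces of $\omega$ and arbitrary subspaces of $\varphi$. It takes half-spaces to subspaces of infinite dimension and infinite codimension, and $T(M)\subseteq M$ if and only if $T'(M^\perp)\subseteq M^\perp$. Finite rank operators on $\omega$ correspond exactly to finite rank maps on $\varphi$. Under this duality $B'$ is multiplication by $z$ on $\mathbb{C}[z]\cong\varphi$, and $F'$ is the locally nilpotent backward shift $b$ on $\varphi$. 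We then regard $\varphi$ as a $\mathbb{C}[z]$-module via $S=T'$ and use the structure theory of modules over the PID $\mathbb{C}[z]$.

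\textbf{(1)$\Rightarrow$(2)} is trivial.

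\textbf{(3)$\Rightarrow$(1).} An almost-invariant half-space for $T$ is an invariant half-space for $T+K$ with $K$ of finite rank, and $T+K$ is again of the form in (3). So it suffices to show that $B+\lambda+R$ and $F+\lambda+R$ have no invariant half-space. Removing $\lambda$ does not change invariant subspaces.

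For $B+R$, let $S=z+K$ on $\mathbb{C}[z]$ with $K$ of finite rank. From $z^n=S z^{n-1}-Kz^{n-1}$ we see by induction that $\{1\}\cup\operatorname{ran}K$ generates the $S$-module. Thus the module is finitely generated, hence of the form $\mathbb{C}[z]^r\oplus\tau$ with $\tau$ finite-dimensional. The filtration by degree grows linearly in dimension, which forces $r=1$. Every submodule of $\mathbb{C}[z]\oplus\tau$ is either contained in the finite-dimensional torsion part or contains $p\,\mathbb{C}[z]$ for some $p\neq0$, in which case it has finite codimension. So there is no invariant half-space.

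For $F+R$ we argue symmetrically. Identify $\omega$ with $\mathbb{C}[[z]]$, on which $F$ is multiplication by $z$, and work with closed submodules. The closed ideals of $\mathbb{C}[[z]]$ are the ideals $z^n\mathbb{C}[[z]]$, which have finite codimension. A finite-rank perturbation again yields a finitely generated module over the local PID $\mathbb{C}[[z]]$, and its closed submodules are finite-dimensional or of finite codimension. Alternatively, we can stay on $\varphi$ and show that every subspace invariant under $b+K$ is finite-dimensional or of finite codimension. This uses that the $b$-invariant subspaces are exactly $\operatorname{span}(e_0,\dots,e_n)$ and $\varphi$, and that $b+K$ agrees with $b$ on a subspace of finite codimension.

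\textbf{(2)$\Rightarrow$(3).} Suppose $S=T'$ has no invariant subspace of infinite dimension and infinite codimension, and decompose the module $\varphi=\mathbb{C}[z]$-module.

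\emph{Torsion-free rank at least $2$.} Pick a non-torsion vector $v$. Then $\mathbb{C}[z]v$ is infinite-dimensional, and the quotient has rank at least $1$, so it is infinite-dimensional. This is a forbidden half-space.

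\emph{Rank $1$.} The torsion submodule, and the torsion quotient $\varphi/\mathbb{C}[z]v$, must each be finite-dimensional. Otherwise one enlarges $\mathbb{C}[z]v$ by an infinite, co-infinite part of that torsion, obtaining a half-space. Hence $S$ is conjugate to multiplication by $z$ up to a finite rank perturbation. Dualizing gives $T\sim B+R$ with $\lambda=0$.

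\emph{Rank $0$ (torsion case).} We split $\varphi=\bigoplus_\mu\varphi_\mu$ into primary components. If two components are infinite-dimensional, or if infinitely many are nonzero, then a suitable sum of components is a half-space. So, up to a finite-dimensional part, $\varphi=\varphi_\lambda$ and $N=S-\lambda$ is locally nilpotent. Now $\ker N$ must be finite-dimensional: otherwise an infinite, co-infinite subspace of $\ker N$ is invariant. The same holds for each $\ker N^{k+1}/\ker N^k$. Using again the absence of half-spaces, one shows that $\dim\ker N^k$ grows linearly with bounded defect. From this we construct a basis in which $N$ equals $b$ up to finite rank. Dualizing gives $T\sim F+\lambda+R$.

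The main obstacle is this last locally nilpotent torsion case. There the module is not finitely generated (it looks like a Prüfer-type module $\mathbb{C}[z,z^{-1}]/\mathbb{C}[z]$), so the standard structure theorem does not apply. My first attempt will be an induction building a chain $e_0,e_1,\dots$ with $Ne_{n+1}-e_n$ lying in a fixed finite-dimensional space. The non-existence of a half-space is used at each step to rule out infinitely many independent Jordan-type chains.
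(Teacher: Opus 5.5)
Your duality reformulation is sound, and it is a genuinely different organisation from the paper's matrix manipulations. Indeed, $T\mapsto T'$ identifies $L(\omega)$ with all linear maps of $\varphi$, $M\mapsto M^\perp$ matches closed half-spaces with subspaces of $\varphi$ of infinite dimension and codimension, $B'$ is multiplication by $z$ and $F'$ is the backward shift $b$. The paper's dichotomy ``some $C_k$ infinite / all $C_k$ finite'' is exactly ``some $e_k^*$ is non-torsion / $\varphi$ is a torsion $\mathbb{C}[z]$-module''. Your rank and primary-decomposition arguments neatly replace Subsection~\ref{SubSecInf} and the analysis of the diagonal.

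\textbf{The gap.} The proof stops exactly where the real work is: the locally nilpotent torsion case, which is the dual of the paper's treatment of the subdiagonal (Proposition~\ref{cupkernel} and the proposition preceding it). There you only assert that $\dim\ker N^k$ ``grows linearly with bounded defect'' and that a basis can be built in which $N$ is $b$ plus a finite-rank map.

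Linear growth is automatic and uses no hypothesis. The numbers $d_k=\dim(\ker N^k/\ker N^{k-1})$ are non-increasing, since $N$ injects $\ker N^{k+1}/\ker N^k$ into $\ker N^k/\ker N^{k-1}$, so they stabilise at some $d\geq 1$. What the absence of half-spaces must give is $d=1$, and for this you must exhibit a half-space when $d\geq 2$. The natural candidate is the span of one infinite Jordan chain ($Nw_1=0$, $Nw_{k+1}=w_k$), which meets $\ker N^k$ in dimension exactly $k$. But a step-by-step induction does not show that such a chain exists: a chosen $w_k$ may fail to lie in $N^j(V)$ for large $j$, because of finite Jordan blocks. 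Your relaxed chains, with $Ne_{n+1}-e_n$ in a fixed finite-dimensional space, do not span invariant subspaces, so they cannot provide that half-space.

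\textbf{The missing ingredient} is the divisible part of $N$. For fixed $k$, the subspaces $N^j(\ker N^{k+j})\subseteq\ker N^k$ decrease in $j$ and, being finite-dimensional, stabilise at some $D_k$. One checks that $N(D_{k+1})=D_k$, that $D_{k+1}\cap\ker N=D_1$, and that $\dim D_1=d$ (because $\dim N^j(\ker N^{j+1})=d_{j+1}$). Hence $\dim D_k=kd$, and $D=\bigcup_k D_k$ is invariant with $N(D)=D$. Moreover $D$ has finite codimension, since $\dim\ker N^k-\dim D_k=\sum_{i\leq k}(d_i-d)$ is bounded.
\begin{itemize}
\item If $d\geq 2$, a chain chosen inside $D$ spans an invariant $W$ with $\dim\ker N^k-\dim(W\cap\ker N^k)\geq(d-1)k\to\infty$, which is a half-space.
\item If $d=1$, then $D$ is spanned by a single chain, $N$ is $b$ on $D$ plus a finite-rank map, and dualising gives $T\sim F+\lambda\operatorname{Id}+R$.
\end{itemize}

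\textbf{Two smaller points also need fixing.}
\begin{itemize}
\item In the rank-one case you cannot in general ``enlarge $\mathbb{C}[z]v$ by an infinite, co-infinite part of the torsion'': $(\varphi,b)$ itself has no such submodule. You do not need to. If the torsion part $\tau$ is infinite-dimensional, then $\tau$ is itself a half-space, because $\varphi/\tau$ is non-zero and torsion-free. If $\varphi/\mathbb{C}[z]v$ is infinite-dimensional, then $\mathbb{C}[z]v$ is one.
\item For $F+R$ there is in general no $\mathbb{C}[[z]]$-module structure: $Re_0=e_0-e_1$ gives $(F+R)e_0=e_0$. So only your argument on $\varphi$ works. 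It should use that $\operatorname{ran}K$ lies in some $b$-invariant $E=\operatorname{span}(e_0,\dots,e_n)$, so that $b+K$ induces $b$ on $\varphi/E$. Agreement with $b$ on $\ker K$ alone is not enough, since $\ker K$ need not be $b$-invariant.
\end{itemize}
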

We recall that an operator $T$ is said to be conjugate to an operator $S$ if there exists an invertible operator $P$ such that $T = P^{-1}SP$.

We will in fact first prove that $T$ does not possess a closed invariant half-space is equivalent to having that $T$ is conjugate to $B+ \lambda \operatorname{Id}  + R$ or to $F+ \lambda \operatorname{Id}+ R$ for $R$ a finite rank operator and for $\lambda \in \mathbb{C}$.
The equivalence between having a closed almost-invariant half-space and having a closed invariant half-space will then follow from this characterization.\\

This paper is organized as follows. In Section~\ref{Sec2}, we show for which types of transformations the property of possessing an invariant half-space is preserved. In Section~\ref{SecFB}, we study the case of the Forward shift and the Backward shift and we show that these operators do not possess a closed almost-invariant half-space.
Actually, we will even demonstrate one of the implications of Theorem~\ref{mainthm}, namely that if $T$ is conjugate to $B + \lambda \operatorname{Id} + R$ or to $F + \lambda \operatorname{Id} + R$ for some finite rank operator $R$ and some $\lambda \in \mathbb{C}$, then $T$ does not possess a closed invariant half-space. Once this is done, we will demonstrate in Section~\ref{SecProof} that if $T$ does not possess a closed invariant half-space then $T$ admits such a conjugacy. 
Our strategy will be to divide the family of operators on $\omega$ into several subfamilies.
First, we will look at the operators for which the set $A$ of indices $n$, such that the nth coordinate of a vector can allow the coordinate of index $0$ to be nonzero under the action of $T$, is infinite.
In this case (Subsection~\ref{SubSecInf}), we will show that either $T$ possesses a closed invariant half-space or is conjugate to $B + \lambda \operatorname{Id} + R$.
We will then look at operators $T$ for which this set $A$ is finite.
In fact, we will even assume that for any $k$, the set of indices $n$ such that $e_n$ sends something non-zero to coordinate $k$ under the action of $T$ is finite. Otherwise, we can reduce ourselves to the previous case.
Therefore we will show in this case (Subsection~\ref{SubSecFin}) that either $T$ has a closed invariant half-space or is conjugate to $F + \lambda \operatorname{Id} + R$.

Finally, in Section~\ref{SecFinal}, we will discuss the consequences of this equivalence. In particular, we will end the proof of Theorem~\ref{mainthm} by deducing the equivalence between having a closed almost-invariant half-space and a closed invariant half-space in the case of $\omega$. We will also give the proof of Theorem~\ref{thmX}.

\section{Invariant half-space and conjugacy}\label{Sec2}

We start this section by introducing the space $\omega$ and several notations.
Let $\mathbb{Z}_{+}$ be the set of non-negative integers. 
We recall that $\omega = \mathbb{C}^{\mathbb{Z}_{+}}$ is the space of all complex sequences.
We then have that $\omega$ is a Fréchet space whose topology is induced by the family of seminorms $(p_j)_{j \geqslant 0}$ where for all $j \geqslant 0$ and for all $x \in \omega$, $p_j(x) = \max_{0 \leqslant n \leqslant j} |x_n|$.
We then also recall that a linear application $T$ defined from $\omega$ to $\omega$ is continuous if for all $j \geqslant 0$, there exist $C_j > 0$ and $J \geqslant 0$ such that for all $x \in \omega$, $p_{j}(Tx) \leqslant C_j p_{J}(x)$.
We denote the set of all linear continuous applications defined on $\omega$ by $L(\omega)$.
Moreover, in this article, we will use the term operator to refer to linear continuous applications. 

Throughout this article, we will need to project certain elements onto specific subspaces.  
We will fix for all $n \geqslant 0$, $P_n$ the projection on the nth coordinate.
We will also fix for all $n \geqslant 0$, $P_{\{0,\ldots,n\}}$ the projection on the subspace $\operatorname{span}\{e_0,\ldots,e_n\}$.
Moreover, we will define the valuation of an element $x \in \omega$ as the first $n \geqslant 0$ such that $P_n x \neq 0$ and we will write $\operatorname{val}(x) = n$.
Then, for example, we have that for all $n \geqslant 0$, $\operatorname{val}(e_n) = n$. 

Since we will need to identify half-spaces in $\omega$, let us begin by noting that if $E$ is a subspace of $\omega$ that possesses an infinite number of elements with different valuations, then $E$ is infinite-dimensional. 
To demonstrate that a subspace $E$ is of infinite codimension, we will use the following result.

\begin{lem}\label{fait}
Let $E$ be a subspace of $\omega$ and $(n_k)_{k \geqslant 0} \subseteq \mathbb{Z}_+$ be an increasing sequence. 
If
$$
n_k - \operatorname{dim}(P_{\{0,\ldots,n_k-1\}}(E)) \underset{k \rightarrow + \infty}{\longrightarrow} + \infty
$$
then $E$ has an infinite codimension.
\end{lem}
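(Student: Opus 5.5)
The idea is to use the finite-dimensional projections $P_{\{0,\ldots,n_k-1\}}$ to produce, for every $N$, a linearly independent family of $N$ vectors of $\omega$ whose span meets $E$ only in $\{0\}$. This shows $\operatorname{codim} E \geqslant N$ for every $N$, hence infinite codimension. The argument is pure linear algebra; no topology is needed, and closedness of $E$ is not assumed.

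First, fix $N \geqslant 1$. By hypothesis we can choose $k$ with
$$
d_k := n_k - \operatorname{dim}(P_{\{0,\ldots,n_k-1\}}(E)) \geqslant N .
$$
Write $V_k = \operatorname{span}\{e_0,\ldots,e_{n_k-1}\}$, a space of dimension $n_k$. Then $F_k := P_{\{0,\ldots,n_k-1\}}(E)$ is a subspace of $V_k$ of codimension $d_k$ in $V_k$. Choose a complement $G_k$ of $F_k$ inside $V_k$, so that $\dim G_k = d_k \geqslant N$ and $G_k \cap F_k = \{0\}$.

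Next, view $G_k$ as a subspace of $\omega$ and check that $G_k \cap E = \{0\}$. Let $x \in G_k \cap E$. Since $x \in V_k$, we have $x = P_{\{0,\ldots,n_k-1\}}x$. Since $x \in E$, this gives $x \in F_k$. Hence $x \in G_k \cap F_k = \{0\}$.

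It follows that the quotient map $\omega \to \omega/E$ is injective on $G_k$, so $\dim(\omega/E) \geqslant d_k \geqslant N$. Since $N$ is arbitrary, $E$ has infinite codimension.

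The only step needing care is the identity $x = P_{\{0,\ldots,n_k-1\}}x$ for $x \in V_k$. This holds because $P_{\{0,\ldots,n_k-1\}}$ is the coordinate projection onto $V_k$, and that is exactly what lets $V_k$ compare $E$ with its projection. The monotonicity of $(n_k)$ is not really needed beyond the choice of a single suitable $k$ for each $N$.
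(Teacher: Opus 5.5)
Your proof is correct and is essentially the paper's argument in primal rather than dual form. The paper takes a complement $\operatorname{span}\{v_1,\ldots,v_{N_k}\}$ of $P_{\{0,\ldots,n_k-1\}}(E)$ in $\operatorname{span}\{e_0,\ldots,e_{n_k-1}\}$ and composes the associated coordinate functionals with $P_{\{0,\ldots,n_k-1\}}$, which gives $N_k$ linearly independent continuous functionals vanishing on $E$; you instead show directly that this complement meets $E$ only in $\{0\}$. The paper's continuous functionals also annihilate $\overline{E}$, but your argument recovers this too, since $P_{\{0,\ldots,n_k-1\}}(\overline{E})=P_{\{0,\ldots,n_k-1\}}(E)$ (the finite-dimensional image is closed), and in any case the lemma is only applied to closed subspaces.
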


\begin{proof}
For all $k \geqslant 0$, let $d_k = \operatorname{dim}(P_{\{0,\ldots,n_k-1\}}(E))$ and $N_k = n_k - d_k$. 
By hypothesis, we know that $(N_k)_{k \geqslant 0}$ converges to infinity.
By definition of $N_k$, we can find linearly independent functionals $y_1^*, \ldots, y_{N_k}^*$ defined from $\operatorname{span}\{e_0,\ldots,e_{n_k-1}\}$ to $\mathbb{K}$ such that 
$$
P_{\{0,\ldots,n_k-1\}}(E) \subseteq \bigcap_{l=1}^{N_k} \operatorname{Ker}y_l^*.
$$
It suffices to consider $(u_1,\ldots,u_{d_k})$ a basis of $P_{\{0,\ldots,n_k-1\}}(E)$, $(v_1,\ldots,v_{N_k})$ such that $(u_1,\ldots,u_{d_k}, v_1,\ldots,v_{N_k})$ forms a basis of $\operatorname{span}\{e_0,\ldots,e_{n_k-1}\}$ and to define for any $1 \leqslant l \leqslant N_k$, 
\[y_l^*\left(\sum_{i=1}^{d_k} x_i u_i + \sum_{i=1}^{N_k} y_i v_i\right) = y_l.\]

Now, if we let for all $1 \leqslant l \leqslant N_k$, $x_l^* = y_l^* \circ P_{\{0,\ldots,n_k-1\}}$, we have that $x_l^*\in \omega^*$ and that these functionals are linearly independent.
We deduce that $\operatorname{codim}(E) \geqslant N_k$ and since $(N_k)_k$ tends to infinity, we conclude that $E$ has an infinite codimension.
\end{proof}

Now that we have a way to identify half-spaces, we want to know under which types of transformations the invariant half-spaces are preserved. We have the following proposition.

\begin{prop}\label{conj preserve DEI}
Let $T,P \in L(\omega)$ such that $P$ is invertible.
Then $T$ possesses a closed invariant half-space if and only if $P^{-1}TP$ possesses a closed invariant half-space.  
\end{prop}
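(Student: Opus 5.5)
The plan is to transport invariant half-spaces through the invertible operator $P$. By the open mapping theorem for Fréchet spaces (or simply because $P^{-1}$ is assumed to lie in $L(\omega)$), both $P$ and $P^{-1}$ are continuous linear bijections of $\omega$, so $P$ is a topological isomorphism. Write $S = P^{-1}TP$. Suppose $M$ is a closed invariant half-space for $T$, and set $N = P^{-1}(M)$. We will check three things: $N$ is closed, $N$ is invariant under $S$, and $N$ is a half-space.

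Closedness holds because $N$ is the preimage of the closed set $M$ under the continuous map $P$. Invariance is the computation
\[S(N) = P^{-1}TP P^{-1}(M) = P^{-1}T(M) \subseteq P^{-1}(M) = N.\]

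For the half-space property, infinite dimension is immediate: $P^{-1}$ is a linear injection, so $\dim N = \dim M = \infty$. Infinite codimension is the only step that needs a small argument, and even it is routine. The linear bijection $P^{-1}$ induces a linear isomorphism $\omega/M \to \omega/N$, $x+M \mapsto P^{-1}x + N$. This map is well defined and injective because $P^{-1}x \in N$ if and only if $x \in M$, and it is surjective because $P^{-1}$ is surjective. Hence $\operatorname{codim} N = \operatorname{codim} M = \infty$. If one prefers the functional description of codimension used in Lemma~\ref{fait}, the same conclusion follows from the dual map: if $x_1^*,\ldots,x_k^*$ are linearly independent continuous functionals vanishing on $M$, then $x_l^*\circ P$ are linearly independent continuous functionals vanishing on $N$, since $P$ is surjective.

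For the converse, apply the same argument with $S$ in place of $T$ and $P^{-1}$ in place of $P$, using $T = P S P^{-1}$. Then a closed invariant half-space $N$ for $S$ yields the closed invariant half-space $P(N)$ for $T$. No step is a genuine obstacle. The only point to state explicitly is that continuity of $P^{-1}$ (given, or supplied by the open mapping theorem on the Fréchet space $\omega$) is what makes $P(N)$ closed in the converse direction.
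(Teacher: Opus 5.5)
Your proof is correct and follows the paper's argument essentially verbatim: you transport $M$ to $P^{-1}(M)$, get infinite dimension from injectivity and infinite codimension from the functionals $x^*\circ P$, and deduce the converse by symmetry. Your explicit justification of closedness (as the preimage of $M$ under the continuous map $P$) and your quotient-isomorphism alternative for the codimension are small additions that the paper leaves implicit.
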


\begin{proof}
Since $PP^{-1}TPP^{-1} = T$, it is enough to show one of the implications.
Let $M$ be a closed invariant half-space of $T$. We first remark that $P^{-1}(M)$ is a closed invariant subspace of $P^{-1}TP$.
Since $P^{-1}$ is an injective operator, we still have that $P^{-1}(M)$ is a closed infinite-dimensional subspace. Moreover, if $(x^*_n)_n$ is a linearly independent sequence in $\omega^*$ such that $x^*_n(M) = \{0\}$ then $(x^*_n \circ P)_n$ is also a linearly independent sequence in $\omega^*$ such that $(x^*_n \circ P)(P^{-1}M) = \{0\}$. We then conclude that $P^{-1}(M)$ is a closed invariant half-space of $P^{-1}TP$.
\end{proof}

In other words, if an operator $T \in L(\omega)$ is conjugate to an operator $S \in L(\omega)$ that possesses a closed invariant half-space, we can then deduce that $T$ also possesses a closed invariant half-space.

The following result will be useful for proving that an operator on $\omega$ is invertible and so to identify some operators conjugate to $T$.
Let us recall that $c_{00}$ is the space of all sequences which have only a finite number of nonzero elements.

\begin{prop}\label{prop invertible}
Let $P$ be a linear application defined from $c_{00}$ to $\omega$. 
If for all $n \in \mathbb{Z}_+$ there exists a unique $k_n$ such that $\operatorname{val}(Pe_{k_n}) = n$ and if $\{k_n:n \geqslant 0\}=\mathbb{Z}_+$, then $P$ can be continuously extended into an invertible operator on $\omega$ by letting
\[P\left(\sum_{n\geqslant 0}x_{k_n} e_{k_n}\right)=\sum_{n\geqslant 0}x_{k_n} Pe_{k_n}.\]
\end{prop}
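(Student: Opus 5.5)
The plan is to handle the formula as a definition and then check three things: that it is well defined as a map $\omega\to\omega$, that it is continuous, and that it has a continuous inverse. Since $n\mapsto k_n$ is a bijection of $\mathbb{Z}_+$ (uniqueness of $k_n$ together with $\{k_n\}=\mathbb{Z}_+$), every $x\in\omega$ can be written as $x=\sum_{n}x_{k_n}e_{k_n}$. Set $f_n=Pe_{k_n}$, so that $\operatorname{val}(f_n)=n$.

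\emph{Definition and continuity.} For a fixed coordinate $j$ we have $P_j f_n=0$ whenever $n>j$, because $\operatorname{val}(f_n)=n>j$. Hence
\[
P_j\Big(\sum_{n\ge0}x_{k_n}f_n\Big)=\sum_{n=0}^{j}x_{k_n}P_jf_n ,
\]
which is a finite sum. So the series converges coordinatewise, and $Px\in\omega$ is well defined and linear. Set $J=\max\{k_0,\dots,k_j\}$ and $C_j=\sum_{n\le j}\max_{i\le j}|P_if_n|$. Then $p_j(Px)\le C_j\,p_J(x)$, so $P$ is continuous. It agrees with the original map on $c_{00}$. It is also the unique continuous extension, since $c_{00}$ is dense in $\omega$.

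\emph{Bijectivity.} Given $y\in\omega$, I solve $Px=y$ coordinate by coordinate. The coefficient of $f_n$ at coordinate $n$ is $P_nf_n\neq0$, and for $m<n$ the vector $f_n$ contributes nothing to coordinate $m$. The system is therefore lower triangular with nonzero diagonal:
\[
y_m=\sum_{n=0}^{m}x_{k_n}P_mf_n ,
\]
which gives recursively
\[
x_{k_m}=\frac{1}{P_mf_m}\Big(y_m-\sum_{n<m}x_{k_n}P_mf_n\Big).
\]
This recursion determines $x$ uniquely, so $P$ is injective and surjective.

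\emph{Continuity of the inverse.} The recursion shows that $x_{k_m}$ is a linear combination of $y_0,\dots,y_m$ only, with coefficients independent of $y$. Hence $|x_{k_m}|\le D_m\,p_m(y)$. For a given $j$, let $M=\max\{m: k_m\le j\}$, which is finite because $n\mapsto k_n$ is a bijection. Then $p_j(P^{-1}y)\le(\max_{m\le M}D_m)\,p_M(y)$, so $P^{-1}$ is continuous. One could instead invoke the open mapping theorem for Fréchet spaces, but the direct estimate is simple.

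I expect no real obstacle. The only point needing care is the index bookkeeping between $k_n$ and $n$, which is what makes the triangular structure and the finiteness of $M$ and $J$ work.
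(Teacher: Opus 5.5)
Your proof is correct and uses the same key fact as the paper: after reindexing the columns by $n\mapsto k_n$, the matrix of $P$ is lower triangular with nonzero diagonal entries $P_n Pe_{k_n}$. Your continuity estimate for $P$ is essentially the paper's. The two proofs differ in how they invert $P$.

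The paper proves surjectivity by building, for each $n$, a preimage $y_n=\sum_{j\geqslant n}\beta_j e_{k_j}$ of $e_n$ through an infinite elimination procedure, and then superposing $y=\sum_n a_n y_n$. It checks injectivity separately, by looking at the first nonzero coefficient $x_{k_n}$. It gets continuity of $P^{-1}$ from the open mapping theorem.

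You instead solve $Px=y$ row by row by forward substitution, which gives existence and uniqueness at once. You then read off from the recursion that $x_{k_m}$ depends only on $y_0,\dots,y_m$. This yields the explicit bound $p_j(P^{-1}y)\leqslant(\max_{m\leqslant M}D_m)\,p_M(y)$.

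Your route is self-contained and quantitative. It avoids the Baire-category machinery behind the open mapping theorem. It also avoids having to justify the convergence of $\sum_n a_n y_n$ and passing $P$ through that infinite sum. The paper's route saves writing out the inverse estimate, at the cost of that heavier theorem.

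One minor imprecision: injectivity of $n\mapsto k_n$ does not come from the uniqueness of $k_n$. It comes from $\operatorname{val}(Pe_k)$ being a single number, so $k_n=k_m$ forces $n=m$. This does not affect the argument.
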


\begin{proof}
We begin by showing that the extension of $P$ to $\omega$ is well-defined and continuous.
Let $n \geqslant 0$. 
Since $\operatorname{val}(Pe_{k_n}) = n$ and $\{k_n:n\geqslant 0\}=\mathbb{Z}_+$, the extension of $P$ is well-defined on $\omega$. We denote
$$
Pe_{k_n} = (\underbrace{0,\ldots,0}_n,\alpha_{n,n}, \alpha_{n+1,n}, \ldots).
$$
Let $x\in \omega$. Since $\{k_n:n\geqslant 0\}=\mathbb{Z}_+$, we can write $x=\sum_{n \geqslant 0} x_{k_n} e_{k_n}$.
Let $j \geqslant 0$.
We remark that for all $l \geqslant j+1$, $p_j(Pe_{k_l}) = 0$.
We fix $J = \max_{0 \leqslant i \leqslant j} k_i$.
Then, we have 
\begin{align*}
p_j(Px) &= p_j\left(\sum_{n=0}^j x_{k_n} Pe_{k_n}\right) = \sup_{0 \leqslant i \leqslant j} |x_{k_0} \alpha_{i,0} + \ldots + x_{k_i} \alpha_{i,i}| \\
&\leqslant \sup_{0 \leqslant i \leqslant j} (| \alpha_{i,0}| + \ldots +| \alpha_{i,i}|)  p_J(x). 
\end{align*}
We can thus deduce the continuity of $P$.

It remains to show that $P$ is bijective since the conclusion will then follow from the Open mapping theorem.
Let $n \geqslant 0$. We show that there exists $y_n \in \omega$ with $y_n=\sum_{j=n}^{\infty}\beta_j e_{k_j}$ such that $\beta_n\ne 0$ and $Py_n = e_n$.
We know that $\operatorname{val}(Pe_{k_n}) = n$ and $P_n(Pe_{k_n}) = \alpha_{n,n} \neq 0$.
Let $\beta_{n}=\frac{1}{\alpha_{n,n}}$.
Then we have that $P_n(P(\beta_n e_{k_n})) = 1$ and $P_l(P(\beta_n e_{k_n})) = 0$ for all $l < n$.
Either $P(\beta_n e_{k_n})=e_n$ and we let $y_n=\beta_n e_{k_n}$ or we consider the smallest $l_1 > n$ such that $P_{l_1}(P(\beta_n e_{k_n})) \neq 0$.
Since $\operatorname{val}(Pe_{k_{l_1}}) = l_1$, by considering $\beta_{l_1}$ such that $P_{l_1}(\beta_n Pe_{k_n} + \beta_{l_1} Pe_{k_{l_1}}) = 0$ we have that $P_n(\beta_n Pe_{k_n}  + \beta_{l_1} Pe_{k_{l_1}}) = 1$ and $P_l(\beta_n Pe_{k_n}  + \beta_{l_1} Pe_{k_{l_1}}) = 0$ for all $l < n$ and for all $n < l \leqslant l_1$.
Again, either $\beta_n Pe_{k_n}  + \beta_{l_1} Pe_{k_{l_1}}=e_n$ and we consider $y_n = \beta_n e_{k_n} + \beta_{l_1} e_{k_{l_1}}$ or we consider the smallest $l_2 > l_1$ such that $P_{l_2}(\beta_n Pe_{k_n}  + \beta_{l_1} Pe_{k_{l_1}}) \neq 0$ and we continue this way.
Then, we get a vector $y_n = \beta_n Pe_{k_n}  + \sum_{i \geqslant 1} \beta_{l_i} e_{k_{l_i}} \in \omega$ satisfying $Py_n = e_n$.
Then, for $x = \sum_{n \geqslant 0} a_n e_n$, by letting
$$
y = \sum_{n \geqslant 0} a_n y_n,
$$
we have that $y \in \omega$ and $Py = x$.
This shows that $P$ is surjective.

Moreover, $P$ is injective because for any nonzero vector $x = \sum_{j\geqslant n} x_{k_j} e_{k_j} \in \omega$ with $x_{k_n}\ne 0$ then we have  
$$
P_n(Px) = P_n \Big(\sum_{j \geqslant n} x_{k_j} P e_{k_j}\Big) = x_{k_n} \alpha_{n,n}  \ne  0.
$$
\end{proof}

Another important ingredient in the study of almost-invariant half-spaces is given by finite rank operators. These operators on $\omega$ are actually easy to identify.

\begin{prop}\label{rang fini,colonnes}
If $R \in L(\omega)$ is a finite rank operator, then there exists $N \geqslant 0$ such that $Re_n=0$ for every $n \geqslant N$. 
In other words, finite rank operators on $\omega$ are given by matrices admitting only a finite number of nonzero columns.
\end{prop}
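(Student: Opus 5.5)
The idea is to write $R$ through a basis of its range and then use the continuity estimate for operators on $\omega$. Let $R \in L(\omega)$ have finite rank. Pick a basis $y_1,\ldots,y_d$ of $R(\omega)$. Since $R(\omega)$ is finite-dimensional, we can choose indices $m_1,\ldots,m_d$ such that the $d\times d$ matrix $\big(P_{m_i}(y_l)\big)_{i,l}$ is invertible. This is linear algebra: the linear maps $x\mapsto P_m x$ restricted to $R(\omega)$ span its dual, because a vector annihilated by all coordinates is zero. Let $M=\max_i m_i$.

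The second step is to show that each $x\in\omega$ satisfies $Rx = \sum_{l=1}^d c_l(x) y_l$, where the coefficients $c_l(x)$ are determined linearly by the coordinates $P_{m_1}(Rx),\ldots,P_{m_d}(Rx)$. This holds because the matrix above is invertible. In particular $Rx=0$ if and only if $P_{m_i}(Rx)=0$ for all $i$, and this is equivalent to $p_M(Rx)=0$.

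Now apply continuity of $R$ with the seminorm index $j=M$. There exist $C_M>0$ and $J\geqslant 0$ such that $p_M(Rx)\leqslant C_M\, p_J(x)$ for all $x\in\omega$. Take $N=J+1$. For $n\geqslant N$ we have $p_J(e_n)=0$, hence $p_M(Re_n)=0$, and so $Re_n=0$ by the previous step. Interpreting the matrix of $R$ with columns $Re_n$, only the columns indexed by $n<N$ can be nonzero.

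The only step needing any care is the choice of the finitely many coordinates $m_1,\ldots,m_d$ that separate the points of $R(\omega)$, so that a single seminorm $p_M$ detects whether a vector in the range is zero. It follows from a standard induction: take a nonzero vector and a coordinate on which it is nonzero, then restrict to the kernel of that coordinate within $R(\omega)$, which lowers the dimension by one, and repeat. Everything after that is a direct use of the continuity criterion recalled at the beginning of Section~\ref{Sec2}.
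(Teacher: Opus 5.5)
Your proof is correct, but it runs in the opposite direction from the paper's.

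The paper argues by contradiction. Continuity gives that each row of the matrix of $R$ has only finitely many nonzero entries. So if infinitely many columns $Re_{m_k}$ were nonzero, their valuations would tend to infinity. Vectors with strictly increasing valuations are linearly independent, which contradicts finite rank.

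You argue directly instead. The finite-dimensional range is separated by finitely many coordinates, i.e.\ $p_M$ is a norm on $R(\omega)$. A single continuity estimate $p_M(Rx)\leqslant C_M\, p_J(x)$ then forces $Re_n=0$ for all $n>J$.

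Both proofs rest on the same continuity estimate. The difference is the linear-algebra fact used at the end: yours is ``finitely many coordinates detect a finite-dimensional subspace'', the paper's is ``distinct valuations imply independence''.

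Your version has two advantages. It produces an explicit $N=J+1$. It also isolates the underlying mechanism: a continuous operator from $\omega$ into a space on which a single seminorm is a norm depends only on finitely many coordinates. This is exactly the argument the paper uses later to show that $T_{\omega X}$ has finite rank in the $X\oplus\omega$ case.

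The paper's version avoids choosing separating coordinates. It also fits the valuation bookkeeping used throughout the rest of the paper, and its row-finiteness remark is reused elsewhere.
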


\begin{proof}
First, we remark that for every operator $T$ on $\omega$, for each $n \geqslant 0$, there is only a finite number of $m \geqslant 0$ such that $P_n(T e_m) \neq 0$ because by continuity for every $N \geqslant 1$, there exists $C>0$ and $M \geqslant 1$ such that for all $x\in \omega$,
\[p_N(Tx) \leqslant C p_M(x)\]
and thus for every $m>M$, $p_N(Te_m)=0$, i.e. for every $n \leqslant N$, $P_n(Te_m)=0$.

Let $R$ be a finite rank operator. Assume that there exists an increasing sequence $(m_k)$ such that $Re_{m_k}\ne 0$. By the previous remark, we have that $\operatorname{val}(Re_{m_k})$ has to tend to infinity. However, it means that $\operatorname{span}\{R(e_{m_k}) \ ; \ k \geqslant 1\}$ is an infinite-dimensional subspace included in $\operatorname{Im}(R)$. Contradiction.
\end{proof}

We finish this section by remarking that if $T$ is conjugate to $S$ on $\omega$ then for any finite rank operator $R$, there exists a finite rank operator $R'$ such that the operator $T+R$ is conjugate to $S+R'$. This fact will be used several times in this paper.







\section{Case of the Forward shift $F$ and the Backward shift $B$}\label{SecFB}

The goal of this section consists in showing that every operator $T$ on $\omega$ that is conjugate to $F + R + \lambda \operatorname{Id}$ or to $B + R + \lambda \operatorname{Id}$ for some $\lambda \in \mathbb{C}$ and some finite rank operator $R$ does not possess a closed invariant half-space. This will imply that such operators do not possess closed almost-invariant half-space and thus that  $3.$ implies $1.$ and $2.$ in Theorem \ref{mainthm}.

\begin{prop}\label{propF}
Let $T\in L(\omega)$.
Let $\lambda \in \mathbb{C}$ and $R$ a finite rank operator on $\omega$. If $T$ is conjugate to $F + R + \lambda \operatorname{Id}$ then $T$ does not possess a closed invariant half-space.
\end{prop}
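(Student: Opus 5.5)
The plan is to reduce to the model operator $F+R$ and then show that any closed invariant subspace of infinite dimension already contains a "tail" subspace of finite codimension.

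\emph{Reduction.} By Proposition~\ref{conj preserve DEI}, it suffices to show that $S = F + R + \lambda \operatorname{Id}$ has no closed invariant half-space. A subspace is invariant under $S$ if and only if it is invariant under $S - \lambda\operatorname{Id}$, so we may assume $\lambda = 0$ and $S = F + R$. By Proposition~\ref{rang fini,colonnes}, there is $N \geqslant 0$ with $Re_n = 0$ for every $n \geqslant N$. Let
\[E_m = \{y \in \omega : P_l y = 0 \text{ for all } l < m\}.\]
If $x \in E_m$ with $m \geqslant N$, then $x = \sum_{n \geqslant m} x_n e_n$ converges in $\omega$, and continuity of $R$ gives $Rx = \sum_{n\geqslant m} x_n Re_n = 0$. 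Hence $Sx = Fx$. Since $F(E_m) \subseteq E_{m+1}$, induction gives $S^k x = F^k x$ for all $k \geqslant 0$.

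\emph{Main argument.} Let $M$ be a closed $S$-invariant subspace of infinite dimension; we show $M$ has finite codimension. First, $M$ contains vectors of arbitrarily large valuation. Indeed, if all nonzero elements of $M$ had valuations in a finite set $V$, then $\dim M \leqslant |V|$. To see this, pick for each $v \in V$ at most one element $u_v \in M$ with valuation $v$. For any nonzero $y \in M$, subtract a suitable multiple of $u_{\operatorname{val}(y)}$; this strictly increases the valuation (or gives $0$), so after at most $|V|$ steps $y$ lies in the span of the $u_v$. This is the same observation the paper makes before Lemma~\ref{fait}.

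So choose $x \in M$ with $m = \operatorname{val}(x) \geqslant N$. By invariance, $F^k x = S^k x \in M$ for all $k$. We claim that the closed span of $\{F^k x : k \geqslant 0\}$ is all of $E_m$. Given $y \in E_m$, solve triangularly for scalars $c_0, c_1, \ldots$ with
\[y = \sum_{k \geqslant 0} c_k F^k x.\]
The coefficient of $e_{m+j}$ on the right involves only $c_0, \ldots, c_j$, with $c_j$ multiplied by $x_m \neq 0$, so each $c_j$ is determined successively. Moreover, in every coordinate the partial sums are eventually constant, because $\operatorname{val}(F^k x) = m + k$. Hence the partial sums, which lie in $M$, converge to $y$ in $\omega$. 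Since $M$ is closed, $E_m \subseteq M$, and $E_m$ has codimension $m$. Therefore $M$ has finite codimension, so $S$ has no closed invariant half-space.

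\emph{Main obstacle.} The only delicate points are the existence of elements of $M$ with large valuation, which is the dimension-counting step above, and the convergence of the triangular series in $\omega$. The latter is immediate because convergence in $\omega$ is coordinatewise and each coordinate stabilizes after finitely many terms.
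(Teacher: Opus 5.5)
Your proof is correct and takes essentially the same approach as the paper. You reduce to $F+R$, use that $R$ vanishes on $\{x : \operatorname{val}(x)\geqslant N\}$ so that $F+R$ acts as $F$ there, and conclude that $M$ contains a whole tail $\{y : \operatorname{val}(y)\geqslant m\}$, which has finite codimension. The only differences are minor: you get an element of $M$ with valuation $\geqslant N$ by counting valuations rather than by intersecting $M$ with the finite-codimensional subspace $M_N$, and you spell out the triangular-series convergence step that the paper leaves implicit.
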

\begin{proof}
We first remark that thanks to Proposition \ref{conj preserve DEI}, it is enough to prove that $F + R + \lambda \operatorname{Id}$ does not possess a closed invariant half-space. Moreover, any operator $S$ has a closed invariant half-space if and only if $S + \lambda \operatorname{Id}$ has a closed invariant half-space. Therefore it suffices to show that for every finite rank operator $R$, the operator $F+R$ does not possess a closed invariant half-space.

Let $R$ be a finite rank operator and $M$ a closed invariant subspace of infinite dimension for $F+R$. We have to show that $M$ cannot be a half-space.

By Proposition~\ref{rang fini,colonnes}, we know that there exists $N \geqslant 0$ such that $Re_n=0$ for every $n \geqslant N$. We let
$$
M_N = \{ x \in \omega \ : \operatorname{val}(x) \geqslant N \}.
$$
We then remark that $M_N$ is a $F$-invariant closed subspace of finite codimension included in the kernel of $R$.

Since $M_N$ is a closed subspace of finite codimension, we deduce that $M_N \cap M$ is a closed infinite-dimensional subspace. Moreover, since $M$ is invariant for $F+R$, $M_N$ is invariant for $F$ and $M_N\subseteq \ker R$, we deduce that $M\cap M_N$ is a closed invariant subspace for $F$.

Let $x \in M_N \cap M$ be a nonzero vector and let $n=\operatorname{val}(x)\geqslant N$.  
Then for all $k \geqslant 0$, $F^kx\in M_N\cap M\subseteq M$ and $\operatorname{val}(F^kx)=n+k$.
Since $M$ contains vectors of valuation $m$ for every $m \geqslant n$, we deduce that $M_n$ is included in $M$ and thus $M$ is not a half-space.


\end{proof}

We now deal with the case of the Backward shift.
\begin{prop}\label{PropB}
Let $T\in L(\omega)$.
Let $\lambda \in \mathbb{C}$ and $R$ a finite rank operator on $\omega$. If $T$ is conjugate to $B + R + \lambda \operatorname{Id}$ then $T$ does not possess a closed invariant half-space.
\end{prop}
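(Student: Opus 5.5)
By Proposition~\ref{conj preserve DEI}, and since $S$ and $S+\lambda\operatorname{Id}$ have the same invariant subspaces, it suffices to show that $B+R$ has no closed invariant half-space for every finite rank operator $R$. The shift $F$ raised valuations, but $B$ lowers them, so I will not copy the proof of Proposition~\ref{propF}. Instead I will pass to the dual, where $B$ becomes a forward shift, and run the same "degree-raising" argument there.

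\emph{Duality set-up.} Recall that $\omega^*$ is identified with $c_{00}$ via $\langle x,y\rangle=\sum_n x_n y_n$, with $(e_n^*)$ the canonical basis. For a closed subspace $M\subseteq\omega$, let $M^\perp=\{y\in c_{00}: \langle x,y\rangle=0 \ \forall x\in M\}$. By Hahn--Banach, $M=\{x\in\omega:\langle x,y\rangle=0\ \forall y\in M^\perp\}$.

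\emph{Transferring invariance.} If $M$ is invariant for $T=B+R$, then $M^\perp$ is invariant for the transpose $T^t$. We have $B^te_n^*=e_{n+1}^*$, so $B^t$ is the forward shift on $c_{00}$. By Proposition~\ref{rang fini,colonnes}, there is $N$ with $Re_n=0$ for $n\geqslant N$. Then $(R^ty)(e_n)=y(Re_n)=0$ for $n\geqslant N$, so $R^t$ takes values in $E_N=\operatorname{span}\{e_0^*,\ldots,e_{N-1}^*\}$.

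\emph{Degree argument in $c_{00}$.} Let $M$ be a closed $T$-invariant subspace of infinite codimension; I will show that $M$ is finite-dimensional. Put $W=M^\perp$. Infinite codimension of $M$ forces $W$ to be infinite-dimensional: if $W$ were finite-dimensional, $M$ would be an intersection of finitely many kernels and hence of finite codimension.

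For $y\in c_{00}\setminus\{0\}$, define $\deg y=\max\{n: y_n\neq 0\}$. An infinite-dimensional subspace of $c_{00}$ contains elements of arbitrarily large degree, so pick $w\in W$ with $d=\deg w\geqslant N$. Then
\[
T^tw=B^tw+R^tw,
\]
where $\deg B^tw=d+1$ and $R^tw\in E_N$ has degree $<N\leqslant d+1$. Hence $\deg T^tw=d+1\geqslant N$. Inductively, $(T^t)^kw\in W$ has degree $d+k$ for every $k\geqslant0$. Consequently these vectors together with $e_0^*,\ldots,e_{d-1}^*$ span $c_{00}$, and $W$ has codimension at most $d$ in $c_{00}$.

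\emph{Concluding.} Choose $v_1,\ldots,v_m$ spanning a complement of $W$, with $m\leqslant d$. Then $M$ is the annihilator of $W$, and the map $x\mapsto(\langle x,v_i\rangle)_{i\leqslant m}$ is injective on $M$, since such an $x$ also kills $W$. So $\dim M\leqslant d<\infty$. Thus no closed invariant subspace of $B+R$ can have both infinite dimension and infinite codimension, which proves the proposition.

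The only delicate point is the duality step. One must check that $M$ is recovered as the annihilator of $M^\perp$, which uses the closedness of $M$ and Hahn--Banach in the Fréchet space $\omega$, and that transposition converts $T$-invariance into $T^t$-invariance. After that, the degree-raising argument is the exact mirror of the proof of Proposition~\ref{propF}.
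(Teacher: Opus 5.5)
Your proof is correct, but it takes a different route from the paper's.

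\textbf{The paper's route.} The paper stays in $\omega$. Let $M$ be a closed infinite-dimensional invariant subspace of $B+R$, and fix $k\geqslant N$. Pick a nonzero $x\in M\cap M_k$, of valuation $n\geqslant k$. Then $(B+R)^{n-k}x=B^{n-k}x$, because each iterate that $R$ acts on has valuation $\geqslant k\geqslant N$ and so lies in $\ker R$. This gives a vector of $M$ of valuation exactly $k$. Since this holds for every $k\geqslant N$ and $M$ is closed, $M_N\subseteq M$, so $M$ has codimension at most $N$.

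\textbf{Your route.} You prove the logically equivalent statement that a closed invariant subspace of infinite codimension is finite-dimensional. You do this on the dual side: in $W=M^\perp\subseteq c_{00}$, the transpose $B^t$ is the forward shift and $R^t$ has range in $E_N$. The degree-raising idea behind Proposition~\ref{propF} then transfers directly and shows that $W$ has finite codimension in $c_{00}$.

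\textbf{What each buys.}
\begin{itemize}
\item The paper's argument is more elementary: it needs neither the identification $\omega^*\cong c_{00}$ nor the bipolar theorem. It also yields the concrete inclusion $M_N\subseteq M$.
\item Your argument pays for a duality step. In return it shows structurally why the $B$ case is the mirror of the $F$ case, and it gives the explicit bound $\dim M\leqslant d$.
\item Hahn--Banach is genuinely needed only for ``$M^\perp$ finite-dimensional $\Rightarrow$ $M$ of finite codimension''. The final injectivity step uses only the trivial inclusion $M\subseteq W_\perp$.
\end{itemize}
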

\begin{proof}
As in the previous proposition, it suffices to show that for every finite rank operator $R$, the operator $B+R$ does not possess a closed invariant half-space.

Let $R$ be a finite rank operator.
Again there exists $N \geqslant 0$ such that $$M_{N}:= \{ x \in \omega \ : \operatorname{val}(x)\geqslant N \} \subset \ker R.$$

Let $M$ be a closed invariant subspace for $B+R$ of infinite dimension. We show that $M_N$ is included in $M$ and thus that $M$ is not a half-space. To this end, it suffices to show that for every $k\geqslant N$, there exists $x\in M$ such that $\operatorname{val}(x)=k$. 
Let $k \geqslant N$. Since $M_k$ is a closed subspace of finite codimension, we have that $M_k \cap M$ is a closed subspace of infinite dimension and we can thus find  a nonzero vector $x \in M\cap M_k$. Let $n=\operatorname{val}(x)\geqslant k$.
Since $M$ is invariant for $B+R$ and $M_N\subseteq \ker R$, we have that 
\[B^{n-k}x = (B+R)^{n-k}x\in M\]
and since $\operatorname{val}(B^{n-k}x)=k$, we get the desired result.

\end{proof}

This allows us to conclude that if there exists a finite rank operator $R$ and $\lambda \in \mathbb{C}$ such that $T$ is conjugate to $B+R+\lambda \operatorname{Id}$ or to $F+R+\lambda \operatorname{Id}$, then $T$ does not possess a closed invariant half-space. In fact, we can even deduce that such an operator $T$ does not possess a closed almost-invariant half-space

\begin{thm}
Let $T \in L(\omega)$. 
If there exist $\lambda \in \mathbb{C}$ and an operator $R \in L(\omega)$ of finite rank  such that $T$ is conjugate to $B+ \lambda \operatorname{Id}  + R$ or to $F+ \lambda \operatorname{Id}+ R$, then $T$ does not possess a closed almost-invariant half-space.
\end{thm}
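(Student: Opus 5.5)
Suppose, for a contradiction, that $T$ possesses a closed almost-invariant half-space $M$. By definition there is a finite rank operator $K \in L(\omega)$ such that $M$ is invariant for $T+K$. So $M$ is a closed invariant half-space of $T+K$. The plan is to show that $T+K$ is again of the form covered by Propositions~\ref{propF} and~\ref{PropB}, which then gives a contradiction.

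Assume $T = P^{-1}SP$ with $S = B+\lambda\operatorname{Id}+R$ or $S = F+\lambda\operatorname{Id}+R$, where $P$ is invertible. Then
\[T+K = P^{-1}\bigl(S + PKP^{-1}\bigr)P.\]
This is exactly the remark at the end of Section~\ref{Sec2}. The operator $R' := R + PKP^{-1}$ is continuous, being a composition of operators in $L(\omega)$. It has finite rank, since $\operatorname{Im}(PKP^{-1}) = P(\operatorname{Im} K)$ is finite-dimensional and a sum of two finite rank operators has finite rank. Hence $T+K$ is conjugate to $B+\lambda\operatorname{Id}+R'$ or to $F+\lambda\operatorname{Id}+R'$, with $R'$ of finite rank.

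Applying Proposition~\ref{PropB} in the first case and Proposition~\ref{propF} in the second, $T+K$ has no closed invariant half-space. This contradicts the choice of $M$, so $T$ has no closed almost-invariant half-space.

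There is no genuine obstacle here. The only points to check are that conjugation preserves finite rank and that finite rank perturbations can be absorbed into $R$. Both are immediate because the Propositions already allow an arbitrary finite rank $R$. One should also make sure that the definition of almost-invariance in use really means invariance under $T+K$ for some finite rank $K \in L(\omega)$; this is how it was introduced in the introduction.
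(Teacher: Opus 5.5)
Your proof is correct and follows the paper's own argument: you absorb the finite rank perturbation into $R$ via $T+K = P^{-1}(S+PKP^{-1})P$, then apply Proposition~\ref{PropB} or Proposition~\ref{propF} to $T+K$. The only difference is that you write out explicitly the conjugation identity which the paper leaves as a remark at the end of Section~\ref{Sec2}.
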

\begin{proof}
It suffices to remark that if $T$ is conjugate to $B+ \lambda \operatorname{Id}  + R$ for some $\lambda \in \mathbb{C}$ and some  operator of finite rank $R \in L(\omega)$ then for every operator of finite rank $R'$, $T+R'$ is conjugate to $B+ \lambda \operatorname{Id}  + R''$ for some  operator of finite rank $R''$ and thus by Proposition~\ref{PropB}, $T+R'$ does not possess a closed invariant half-space. We conclude in the same way if $T$ is conjugate to $F+ \lambda \operatorname{Id}  + R$ by using Proposition~\ref{propF}.
\end{proof}

We have thus shown that $3.$ implies $2.$ and $1.$ in Theorem \ref{mainthm}.

\section{Proof of Theorem~\ref{mainthm}}\label{SecProof}

We now present the most significant implication of Theorem \ref{mainthm}.
We will prove in this section that $2.$ implies $3.$. 
That is to say, we will show that if an operator $T$ defined on $\omega$ does not posses a closed invariant half-space then there exist a finite rank operator $R$ and $\lambda \in \mathbb{C}$ such that $T$ is conjugate to $B + R + \lambda \operatorname{Id}$ or to $F + R + \lambda \operatorname{Id}$.

The strategy to prove this implication is to divide the family of operators into different subfamilies depending on the set
\[A = \{n \geqslant 1 \  ; \  P_0 T^{l}e_n \neq 0 \ \text{for some $l$}\}.\]

We will begin by focusing on the operators for which the set $A$ is infinite. We will then show that either $T$ possesses a closed invariant half-space or is conjugate to $B + \lambda \operatorname{Id} + R$ with $\lambda \in \mathbb{C}$ and $R$ a finite rank operator.

In a second part, we will consider all the operators $T$ for which the set $A$ is finite.
In this case, we will even assume that for any $k$, the set $\{n \ne k \ ; \ P_k T^{l}e_n \neq 0 \ \text{for some $l$}\}$ is finite. 
Otherwise, we can revert to the first case.
We will then show in this case that either $T$ has a closed invariant half-space or is conjugate to $F + \lambda \operatorname{Id} + R$ with $\lambda \in \mathbb{C}$ and $R$ a finite rank operator.

\subsection{A is infinite}\label{SubSecInf}
For this section, we now fix an operator $T$ for which the set $A$ is infinite and we let for each $l \geqslant 1$,
$$
A_l = \{n \geqslant 1 \ ; \ P_0 T^l e_n \neq 0 \text{ and } P_0 T^{l'} e_n = 0 \text{ for all } l' < l \}.
$$
We then have that $A = \bigcup_{l \geqslant 1} A_l$.
We can also remark that for all $l \geqslant 1$, $A_l$ is a finite set. 
Indeed, by continuity of $T$, for all $l \geqslant 1$, there exist $C>0$ and $J\geqslant 1$ such that for $n \geqslant 1$,
$$
p_0(T^l e_n) \leqslant C p_J(e_n)
$$
and thus $A_l \subseteq [1,J]$.

We now consider the two following possibilities:  either $A$ is an infinite set that is not cofinite or $A$ is a cofinite set.

We start by showing that if $A$ is an infinite set that is not cofinite, then $T$ possesses a closed invariant half-space.

\begin{prop}\label{notcofinite}
Let $T\in L(\omega)$. If $A$ is an infinite set that is not cofinite then $T$ possesses a closed invariant half-space.
\end{prop}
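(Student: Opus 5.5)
The plan is to take the largest $T$-invariant subspace contained in $\ker P_0$, namely
\[
M = \{x \in \omega \ ; \ P_0 T^l x = 0 \ \text{for all } l \geqslant 0\} = \bigcap_{l \geqslant 0} \ker (P_0 T^l),
\]
and to check that it is a closed invariant half-space.

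The first two properties are immediate.
\begin{itemize}
\item \emph{Closed.} Each functional $\varphi_l := P_0 T^l$ is continuous, so $M$ is an intersection of closed hyperplanes and hence closed.
\item \emph{Invariant.} If $x \in M$, then $P_0 T^l (Tx) = P_0 T^{l+1} x = 0$ for every $l \geqslant 0$. Hence $Tx \in M$.
\end{itemize}

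Next, $M$ is infinite-dimensional. Take $n \geqslant 1$ with $n \notin A$. Then $P_0 e_n = 0$, which is the case $l=0$. By definition of $A$, we also have $P_0 T^l e_n = 0$ for every $l \geqslant 1$. So $e_n \in M$. Since $A$ is not cofinite, infinitely many $n \geqslant 1$ lie outside $A$. Therefore $M$ contains infinitely many $e_n$ and is infinite-dimensional.

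The remaining point, and the only one needing an argument, is that $M$ has infinite codimension. Suppose the span of $\{\varphi_l \ ; \ l \geqslant 0\}$ in $\omega^*$ were finite-dimensional.

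Every continuous functional on $\omega$ depends on only finitely many coordinates. Indeed, by continuity $|\varphi(x)| \leqslant C p_J(x)$ for some $J$, so $\varphi(e_n) = 0$ for all $n > J$. Applying this to a finite basis of the span, we obtain a single $J$ such that $\varphi_l(e_n) = P_0 T^l e_n = 0$ for all $l \geqslant 0$ and all $n > J$. This gives $A \subseteq [1,J]$, contradicting the assumption that $A$ is infinite.

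Hence the family $(\varphi_l)_{l \geqslant 0}$ spans an infinite-dimensional subspace of $\omega^*$. We can extract an infinite linearly independent sequence of continuous functionals from it, all vanishing on $M$. As in the proof of Proposition~\ref{conj preserve DEI}, this shows that $M$ has infinite codimension. So $M$ is a closed invariant half-space for $T$.

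One could alternatively prove infinite codimension via Lemma~\ref{fait}, but the functional argument above seems the most direct. I expect no real obstacle; the only subtle point is the finite-support description of continuous functionals on $\omega$, which follows from the same continuity estimate used in Proposition~\ref{rang fini,colonnes}.
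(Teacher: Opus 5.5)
Your proof is correct and is essentially the paper's argument. The paper uses the smaller space $\overline{\operatorname{span}}\{T^l e_n : n \in A^c\setminus\{0\},\ l \geqslant 0\}$ rather than your $\bigcap_{l} \ker(P_0T^l)$, which it uses later in Proposition~\ref{|A_l| >= 2}, and shows infinite codimension with the same functionals $P_0T^{l}$. The only other difference is how linear independence is obtained: the paper chooses $l_m$ with $A_{l_m}\neq\emptyset$ and uses the triangular pattern $P_0T^{l_m}e_n\neq 0$, $P_0T^{l_k}e_n=0$ for $n\in A_{l_m}$, $k<m$, whereas you argue by contradiction from the finite support of continuous functionals on $\omega$; both rest on the same continuity estimate.
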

\begin{proof}
Since $A^c$ is infinite, $M = \overline{\text{span}}\{T^l e_n : n \in A^c\backslash\{0\},\ l \geqslant 0\}$ is a $T$-invariant closed subspace of infinite dimension.
It remains to show that $M$ is of infinite codimension.
Since $A$ is infinite and since each set $A_l$ is finite, we know that there exists a strictly increasing sequence $(l_m)_m$ such that $A_{l_m} \neq \emptyset$ for all $m \geqslant 1$. Let $(x^*_{m})_m$ be the sequence defined by $x^*_{m}(x) = P_0 T^{l_m}(x)$ for all $x \in \omega$.
We remark that this sequence is linearly independent because for every  $m \geqslant 1$, by considering $n \in A_{l_m}$, we have $x^*_{m}(e_n) \neq 0$ and for all $k < m$, $x^*_{k}(e_n) = 0$. 
Moreover, we have $M\subset \ker x^*_{m}$ for all $m \geqslant 1$ since for all $n \in A^c\backslash\{0\}$ and for all $l \geqslant 0$, $P_0 T^{l} e_n = 0$. 
We can thus deduce that $M$ is a half-space. 
\end{proof}

We are now investigating the sets $A_l$ in order to deal with the case where $A$ is cofinite. 
We start by showing that by conjugacy, if $A$ is infinite, we can assume that each set $A_l$ is non-empty.

\begin{prop}\label{|A_l|>= 1}
If $A$ is infinite then $T$ is conjugate to an operator for which for all $l\geqslant 1$, we have $|A_l| \geqslant 1$.
\end{prop}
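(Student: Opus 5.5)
The idea is to keep the coordinate functional $P_0$ fixed and change the basis of the coordinates $n\geqslant 1$. The new basis is chosen so that the functionals $\varphi_l:=P_0T^l$ become ``triangular'' with respect to it. Once this is done, each $\varphi_l$ has a pivot basis vector that is killed by all earlier $\varphi_{l'}$, and that pivot vector lies in $A_l$ for the conjugated operator.

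\textbf{Step 1: the functionals $P_0, P_0T, P_0T^2,\ldots$ are linearly independent in $\omega^*$.} Suppose that for some $l$ we had $\varphi_l\in\operatorname{span}\{\varphi_0,\ldots,\varphi_{l-1}\}$, where $\varphi_0=P_0$. Composing this relation with $T$ shows that $\varphi_{l+1}$ lies in $\operatorname{span}\{\varphi_1,\ldots,\varphi_l\}$, hence in $\operatorname{span}\{\varphi_0,\ldots,\varphi_{l-1}\}$. By induction every $\varphi_m$ lies in this finite-dimensional span. Each $\varphi_m$ has only finitely many nonzero coefficients (this is continuity, as in the proof of Proposition~\ref{rang fini,colonnes}). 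So all $\varphi_m$ would be supported on a common finite set of coordinates, and $A$ would be finite, a contradiction. Since $\varphi_0=P_0$ only sees $e_0$, it follows that the restrictions $\psi_l$ of $\varphi_l$ ($l\geqslant 1$) to $\operatorname{span}\{e_n : n\geqslant 1\}$ are linearly independent, finitely supported row vectors.

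\textbf{Step 2: a triangular basis.} Perform Gaussian elimination on the rows $\psi_1,\psi_2,\ldots$ in this order. At stage $l$, subtract from $\psi_l$ suitable multiples of the earlier reduced rows so as to clear the previously chosen pivot columns $k_1,\ldots,k_{l-1}$. The result is nonzero by Step 1, so we may choose a new pivot column $k_l\geqslant 1$ where it does not vanish. We then want vectors $u_l$ with the following properties:
\begin{itemize}
\item $P_{k_l}u_l = 1$ and $P_0u_l=0$;
\item $\varphi_{l'}(u_l)=0$ for $l'<l$ and $\varphi_l(u_l)\neq 0$;
\item $\operatorname{val}(u_l)=k_l$.
\end{itemize}
We set $Pe_0=e_0$, $Pe_{k_l}=u_l$, and $Pe_n=e_n$ for the non-pivot columns $n\geqslant 1$, possibly corrected by vectors of higher valuation. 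Proposition~\ref{prop invertible} then shows that $P$ extends to an invertible operator on $\omega$.

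\textbf{Step 3: conclusion.} Since $Pe_n$ has zero $0$th coordinate for every $n\geqslant 1$ and $Pe_0=e_0$, we have $P_0P=P_0$, and therefore $P_0P^{-1}=P_0$. For $S=P^{-1}TP$ this gives $P_0S^{l}e_n=P_0T^lPe_n=\varphi_l(Pe_n)$. With $n=k_l$, the properties of $u_l$ give $P_0S^{l'}e_{k_l}=0$ for $l'<l$ and $P_0S^le_{k_l}\neq 0$, so $k_l\in A_l(S)$. Hence $|A_l(S)|\geqslant 1$ for every $l\geqslant 1$.

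\textbf{Main obstacle.} The delicate point is Step 2: reconciling the orthogonality conditions $\varphi_{l'}(u_l)=0$ for $l'<l$ with the valuation requirement $\operatorname{val}(u_l)=k_l$ needed to apply Proposition~\ref{prop invertible}. Since $u_l$ may only involve coordinates $\geqslant k_l$, I would choose the pivots so that all the correction terms live at higher indices. Concretely, I would first try to take $k_l$ to be the smallest index, beyond those already used, at which the reduced row $\psi_l$ is nonzero. Then I would make $u_l$ orthogonal to $\varphi_1,\ldots,\varphi_{l-1}$ by adding multiples of the vectors $u_{l'}$ and of $e_m$ with $m>k_l$. The finiteness of the supports of the $\varphi_{l'}$, together with their independence, should make this solvable. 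If it is not, the fallback is to first permute the coordinates $n\geqslant 1$; a permutation is trivially an invertible conjugacy fixing $P_0$.
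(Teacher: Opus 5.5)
Your Steps 1 and 3 are correct. Step 3 even makes explicit the identity $P_0P^{-1}=P_0$, which the paper uses tacitly. However, Step 2 carries the whole content of the proposition, and it is not proved; the concrete recipe you give for it fails.

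\textbf{The smallest-column pivot is the wrong choice.} For a nonzero finitely supported functional $\psi$, $\ker\psi$ contains vectors of every valuation $\geqslant 1$ except $\max\operatorname{supp}\psi$. So if $\psi_1$ has at least two nonzero entries and you take $k_1=\min\operatorname{supp}\psi_1$, the valuation $\max\operatorname{supp}\psi_1$ is unavailable for every later $u_l$. Your elimination may nevertheless select it.

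\textbf{A counterexample.} Let $(Tx)_0=x_1+x_2$, $(Tx)_1=-x_3$, $(Tx)_2=x_2+x_3$ and $(Tx)_n=x_{n+1}$ for $n\geqslant 3$. Then $\varphi_1(x)=x_1+x_2$ and $\varphi_l(x)=x_2+\dots+x_l$ for $l\geqslant 2$. Hence $A=\{1,2,\dots\}$, and $A_2=\emptyset$ for $T$ itself.
\begin{itemize}
\item Your rule gives $k_1=1$ and $k_2=2$.
\item No $u_2$ with $\operatorname{val}(u_2)=2$ satisfies $\varphi_1(u_2)=0$, since $\varphi_1(u_2)=P_2u_2\neq 0$.
\item Adding a multiple of $u_1$, which has valuation $1$, destroys the valuation.
\item Permuting the coordinates $n\geqslant 1$ does not help. $\varphi_1,\varphi_2$ become $x\mapsto x_a+x_b$ and $x\mapsto x_b$, your rule yields $k_1=\min(a,b)$ and $k_2=\max(a,b)$, and the same obstruction occurs.
\end{itemize}
The correct choice here is $k_1=2$, $k_2=1$, with $u_1=e_2$ and $u_2=e_1-e_2$.

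\textbf{The missing idea is the right pivot.} Put $V_l=\{x\in\omega : P_0x=0,\ \varphi_{l'}(x)=0 \text{ for } 1\leqslant l'<l\}$. Let $k_l$ be the largest $j$ such that $\varphi_l$ does not vanish on $V_l\cap\overline{\operatorname{span}}\{e_m : m\geqslant j\}$. This exists by your Step 1 and the finite support of $\varphi_l$. In elimination terms, $k_l$ is the \emph{largest} nonzero column of the reduced row, with all pivots chosen this way.
\begin{itemize}
\item $V_l$ then contains some $u_l$ with $\operatorname{val}(u_l)=k_l$ and $\varphi_l(u_l)\neq 0$.
\item $V_{l+1}=V_l\cap\ker\varphi_l$ contains no vector of valuation $k_l$.
\item Since $u_{l'}\in V_{l'}\subseteq V_{l+1}$ for $l'>l$, the $k_l$ are distinct, and your Step 3 goes through.
\end{itemize}

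\textbf{How the paper avoids the issue.} The paper sidesteps pivots altogether. It starts from $v_l=T^{l_k-l}e_{n_k}$, which already satisfy $P_0T^{l'}v_l=0$ for $l'<l$ and $P_0T^{l}v_l\neq 0$. It then separates valuations only by subtracting multiples of \emph{later} vectors $v_k$, $k>l$. These lie in $\ker P_0T^{l'}$ for all $l'<k$, so they leave $P_0T^{l'}v_l$ unchanged for $l'\leqslant l$. Your corrections run in the opposite direction, using earlier $u_{l'}$ of lower valuation, which is exactly why they break.
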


\begin{proof}
We recall that since $A$ is infinite, an infinite number of sets $A_l$ is non-empty. We can thus consider a strictly increasing sequence $(l_k)_{k \geqslant 1}$ and a sequence $(n_k)_{k \geqslant 1}$ such that $n_k\in A_{l_k}$. Let $l \geqslant 1$. 
We let $v_l = T^{l_1-l} e_{n_1}$ if $l \leqslant l_1$ and $v_l=T^{l_k-l}e_{n_k}$ if $l_{k- 1} < l \leqslant l_k$.

We then get a sequence $(v_l)_{l \geqslant 1}$ such that for all $l \geqslant 1$, we have
$$
P_0 T^l v_l \neq 0 \text{  and  } P_0 T^{l'} v_l = 0 \text{ for all } l' < l.
$$

We now show that we can modify this sequence $(v_l)_{l \geqslant 1}$ such that the value of $P_0 T^{l'} v_l$ does not change for any $l' \leqslant l$ but the elements $v_l$ have different valuations. 


Let $I_0=\mathbb{Z}_+$. There are three possibilities:
\begin{itemize}
\item If $\{l\in I_0: v_{l,1}\ne 0\}$ is empty, then we do not change the vectors $v_l$ and we let $I_1=I_0$.  
\item If $\{l\in I_0: v_{l,1}\ne 0\}$ is finite, then we consider the largest $k\in I_0$ such that $v_{k,1} \neq 0$ and for all $l < k$ with $l\in I_0$, we replace $v_l$ by $v_l - \frac{v_{l,1}}{v_{k,1}} v_k$ so that $P_1 (v_l -\frac{v_{l,1}}{v_{k,1}}v_k) = 0$.
After these modifications, $v_k$ is the only element on valuation $1$ and we let $I_1=I_0\backslash\{k\}$.
\item If $\{l\in I_0: v_{l,1}\ne 0\}$ is infinite,  then we consider $(m_k)_{k\geqslant 1}$ the increasing enumeration of this set. 
Let $k \geqslant 1$ and $m_{k-1} \leqslant l < m_k$. 
We then replace $v_l$ by $v_l - \frac{v_{l,1}}{v_{m_k,1}} v_{m_k}$ so that
$P_1 (v_l - \frac{v_{l,1}}{v_{m_k,1}} v_{m_k}) = 0$ and we let $I_1=I_0$.
After these modifications, no $v_l$ with $l\in I_0$ has a valuation equals to $1$.
\end{itemize}

After this step, we have thus a new family $(v_l)_{l\in I_0}$ such that the values of $P_0 T^{l'} v_l$ have not changed for any $l' \leqslant l$ and such that for all $l\in I_1$, the vector $v_l$ has a valuation bigger than 2. 
Notice that we also have $|I_1\backslash I_0|\leqslant 1$. 
We then repeat these modifications for $I_1$ and the coordinate of index $2$. 
By continuing in this way for each coordinate, we get at the end a new sequence $(v_l)_{l \geqslant 1}$ such that for all $l \geqslant 1$, we still have
$$
P_0 T^l v_l \neq 0 \text{  and  } P_0 T^{l'} v_l = 0 \text{ for all } l' < l
$$
and such that for each $k \in \mathbb{Z}_+$, there is now at most one element $v_l$ of valuation $k$. It should be noted that the successive modifications of $v_l$ will converge since at the kth step we will only potentially modify the coordinates of indices bigger than $k$ and that this limit will never be zero since the value of $P_0T^lv_l$ never changes.  



Finally, we consider a sequence $(w_l)_{l \geqslant 0}$ with $w_0=e_0$, containing each element $v_l$ for $l \geqslant 1$ and containing $e_k$ for each $k \geqslant 1$ such that there exists no $l$ such that $\operatorname{val}(v_l) = k$. 
We now consider $P$ the linear application such that $P(e_n) = w_n$. 
By Proposition \ref{prop invertible}, we have that $P$ gives us an invertible operator on $\omega$ and thus $T$ is conjugate to $S=P^{-1}TP$. 
Moreover, the sets $A_l$ associated to $S$ satisfy for each $l \geqslant 1$
$$
A_l = \{n \geqslant 1 \ ; \ P_0 S^l e_n \neq 0 \text{ and } P_0 S^{l'} e_n = 0 \text{ for all } l' < l \}\ni k
$$ 
where $k$ is such that $P(e_k) = v_l$.
\end{proof}

Since conjugacy does not change the existence of closed invariant half-spaces, we can assume from now that for all $l \geqslant 1$, $|A_l| \geqslant 1$. We now aim to show that depending on if there exists an infinite number of $l \geqslant 1$ such that $|A_l| \geqslant 2$ or not, the operator $T$ will possess a closed invariant half-space or will be conjugate to $B + R + \lambda \operatorname{Id}$ for some finite rank operator $R$ and some $\lambda \in \mathbb{C}$.

\begin{prop} \label{|A_l| >= 2}
If for all $l\geqslant 1$, we have $|A_l| \geqslant 1$ and if there exists an infinite number of $l \geqslant 1$ such that $|A_l| \geqslant 2$, then $T$ possesses a closed invariant half-space.
\end{prop}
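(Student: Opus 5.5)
The plan is to take as candidate the largest $T$-invariant subspace contained in $\ker P_0$:
$$
M=\bigcap_{l\geqslant 0}\ker\big(P_0T^l\big)=\{x\in\omega \ ;\ P_0T^lx=0 \text{ for all } l\geqslant 0\}.
$$
It is closed, since each $P_0T^l$ is continuous. It is $T$-invariant, because $x\in M$ implies $P_0T^l(Tx)=P_0T^{l+1}x=0$ for all $l$.

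Infinite codimension is handled as in Proposition~\ref{notcofinite}. Since every $A_l$ is non-empty, the functionals $x^*_l=P_0T^l$ for $l\geqslant 1$ are linearly independent. Indeed, for $n\in A_l$ we have $x^*_l(e_n)\neq 0$ and $x^*_{l'}(e_n)=0$ for all $l'<l$. All of them vanish on $M$, so $M$ has infinite codimension.

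The heart of the proof, and the main obstacle, is showing that $M$ is infinite-dimensional; this is where the hypothesis $|A_l|\geqslant 2$ infinitely often is used. I would view the map $\Phi:x\mapsto (P_0T^lx)_{l\geqslant 0}$ as an infinite matrix with rows indexed by $l\geqslant 0$ and columns by $n\geqslant 0$. It has two key features.
\begin{enumerate}
\item Each row has only finitely many nonzero entries. This follows from the continuity of $T^l$, exactly as in the proof that $A_l$ is finite.
\item The matrix is in echelon form. Column $0$ has its first nonzero entry in row $0$. A column $n\in A_l$ has its first nonzero entry in row $l$. A column $n\notin A\cup\{0\}$ is identically zero, except possibly row $0$ when $P_0e_n\neq 0$, which can only happen for $n=0$.
\end{enumerate}
For each $l\geqslant 1$, choose a pivot column $c_l\in A_l$, and set $c_0=0$. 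Call every other column free. By hypothesis, infinitely many $A_l$ contain a second element, so there are infinitely many free columns.

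For each free column $m$, I would construct $x^{(m)}\in M$ with $x^{(m)}_m=1$ and all other free coordinates equal to $0$. The pivot coordinates $x_{c_0},x_{c_1},\ldots$ are determined recursively by imposing the row equations $\Phi(x)_l=0$ for $l=0,1,2,\ldots$ in order. Row $l$ involves only finitely many columns. Among pivot columns, it can involve only $c_{l'}$ with $l'\leqslant l$, since $c_{l'}$ vanishes in rows below $l'$. Its coefficient on $c_l$ is $P_0T^le_{c_l}\neq 0$. Hence $x_{c_l}$ is uniquely solvable once $x_{c_0},\ldots,x_{c_{l-1}}$ are known.

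There is no convergence issue, since every element of $\omega$ is a legitimate vector. Moreover $\Phi(x^{(m)})_l$ is a finite sum for each $l$, so $x^{(m)}\in M$. The vectors $x^{(m)}$, for $m$ ranging over the infinitely many free columns, are linearly independent, since they are distinguished by their free coordinates. Therefore $M$ is infinite-dimensional.

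Combining the two parts, $M$ is a closed invariant half-space for $T$. The points to check carefully are the row-finiteness, which makes $\Phi(x)$ well defined on all of $\omega$, and the echelon property of the pivot columns; both follow directly from the definition of the sets $A_l$ and the continuity of $T$.
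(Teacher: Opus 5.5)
Your proof is correct and follows essentially the same route as the paper. It uses the same subspace $M=\{x : P_0T^lx=0 \text{ for all } l\geqslant 0\}$, the same independent functionals $P_0T^l$ for infinite codimension, and the same recursive triangular solve along chosen representatives $n_j\in A_j$ to produce independent vectors in $M$. In fact, with matching choices of pivots, your vector $x^{(m)}$ for a free column $m=m_{l_k}$ is a scalar multiple of the paper's $v_k$, so the echelon (pivot/free column) framing is just cleaner bookkeeping for the paper's construction.
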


\begin{proof}
Let $(l_k)_k$ be an increasing enumeration of indices $l$ such that $|A_{l}| \geqslant 2$. 
We have then that for $l \notin \{l_k : k \geqslant 1\}$, $|A_l| = 1$ and we denote $A_l = \{n_l\}$.
Let $(e_{n_{l_k}})_{k \geqslant 1}$ and $(e_{m_{l_k}})_{k \geqslant 1}$ such that $\{n_{l_k}, m_{l_k}\} \subseteq A_{l_k}$ with $n_{l_k}\ne m_{l_k}$.
Then,
$$
P_0 T^{l_k} e_{n_{l_k}} \neq 0 \text{ and } P_0 T^{l} e_{n_{l_k}} = 0 \text{ for all } l < l_k
$$
and 
$$
P_0 T^{l_k} e_{m_{l_k}} \neq 0 \text{ and } P_0 T^{l} e_{m_{l_k}} = 0 \text{ for all } l < l_k.
$$
Let $M = \{x \in \omega : P_0T^{l}(x) = 0 \text{ for all } l \geqslant 0\}$. 
We have that $M$ is a $T$-invariant closed subspace of infinite codimension.
Indeed, we consider the sequence $(x^*_{l})_{l \geqslant 1}$ defined by $x^*_{l}(x) = P_0 T^{l}(x)$ for all $x \in \omega$.
This sequence is linearly independent.
Otherwise, we would have that for a certain $l \geqslant 2$, $x^*_{l} = \sum_{k=1}^{l-1} \lambda_k x^*_{k}$ where $\lambda_1, \ldots, \lambda_{l-1} \in \mathbb{C}$. 
Which is impossible because for $n_l \in A_{l}$, we have that $x^*_{l}(e_{n_l}) = P_0(T^l e_{n_l}) \neq 0$ and for all $k < l$, $x^*_{k}(e_{n_l}) = P_0(T^k e_{n_l}) = 0$. 
Moreover, we have that $x^*_{l}(M) = \{0\}$ for all $l \geqslant 1$. 

We now show that $M$ is of infinite dimension.
Let $k \geqslant 1$. 
We fix $\alpha$ such that $P_0 T^{l_k} e_{n_{l_k}} - \alpha P_0 T^{l_k} e_{m_{l_k}} = 0$. 
We also fix $(\beta_l)_{l \geqslant l_k + 1}$ such that
$$
\beta_{l_k + 1} = \frac{-P_0 T^{l_k + 1} e_{n_{l_k}} + \alpha P_0 T^{l_k + 1} e_{m_{l_k}}}{P_0 T^{l_k + 1} e_{n_{l_k + 1}}}
$$
and for all $l \geqslant l_k + 2$,
$$
\beta_{l} = \frac{-P_0 T^{l} e_{n_{l_k}} + \alpha P_0 T^{l} e_{m_{l_k}} - \sum_{j = l_k + 1}^{l-1} \beta_j P_0 T^l e_{n_j}}{P_0 T^{l} e_{n_l}}.
$$
We consider $v_k = e_{n_{l_k}} - \alpha e_{m_{l_k}} + \sum_{j = l_k + 1}^{\infty} \beta_j e_{n_j}\in \omega$. 
We then have that for every $l \geqslant 0$, $P_0(T^l v_k) = 0$. 
Indeed, for all $l \geqslant l_{k}+1$, we have by definition of $\beta_l$
\begin{align*}
P_0(T^l v_k) &=
P_0T^le_{n_{l_k}} - \alpha P_0 T^le_{m_{l_k}} + \sum_{j = l_k + 1}^{\infty} \beta_j P_0T^l e_{n_j}\\
&=
P_0T^le_{n_{l_k}} - \alpha P_0 T^le_{m_{l_k}} + \beta_lP_0T^l e_{n_l}+\sum_{j = l_k + 1}^{l-1} \beta_j P_0T^l e_{n_j} = 0.
\end{align*}
Moreover, for all $l < l_{k}$, we have $P_0(T^l v_k) = 0$ since for all $j \geqslant l_k$, we have $P_0 T^l e_{n_j} = P_0 T^l e_{m_{l_k}} = 0$.
Finally, for $l=l_k$, we have by definition of $\alpha$,
$$
P_0(T^{l_k} v_k) =
P_0T^{l_k} e_{n_{l_k}} - \alpha P_0 T^{l_k} e_{m_{l_k}} = 0.
$$

We then have $(v_k)_{k \geqslant 1} \subseteq M$. 
Moreover, we have $P_{m_{l_k}} v_k \neq 0$ and for all $l \neq k$, $P_{m_{l_k}} v_l = 0$. 
We can then deduce that the vectors $v_k$ are linearly independent.
As a consequence, $M$ is infinite-dimensional and thus a closed invariant half-space.
\end{proof}

We now show that in the remaining case, the operators are conjugate to $B + R + \lambda \operatorname{Id}$ for $R$ a finite rank operator and for $\lambda \in \mathbb{C}$.
We have the following result.

\begin{prop} \label{|A_l| = 1}
If $A$ is cofinite and if there exists $l_0$ such that $|A_l| = 1$ for all $l \geqslant l_0$, then there exist a finite rank operator $R$ and $\lambda \in \mathbb{C}$ such that $T$ is conjugate to $B + R + \lambda \operatorname{Id}$.
\end{prop}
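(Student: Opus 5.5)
The plan is to use the functionals $x^*_l = P_0T^l$ to build an explicit conjugacy, with $\lambda = 0$ in the conclusion. The basic observation is that the map
\[
\Phi:\omega\to\omega,\qquad \Phi(x) = (P_0T^lx)_{l\geqslant 0}
\]
is continuous, since each coordinate is a continuous functional. It also satisfies the intertwining relation $\Phi T = B\Phi$. So $T$ behaves like $B$ after applying $\Phi$, and the task is to show that $\Phi$ is an isomorphism up to a finite-dimensional piece.

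The first step fixes a splitting of $\omega$. For $l\geqslant l_0$ let $A_l=\{n_l\}$, and set
\[
N = \overline{\operatorname{span}}\{e_{n_l} : l\geqslant l_0\}, \qquad G = \operatorname{span}\{e_k : k\notin\{n_l: l\geqslant l_0\}\}.
\]
Because $A$ is cofinite and each $A_l$ is finite, only finitely many indices are not of the form $n_l$ with $l\geqslant l_0$. Hence $G$ is finite-dimensional and $\omega = G\oplus N$, with both projections continuous. By definition of $A_l$, the vector $\Phi(e_{n_l})$ has valuation exactly $l$.

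The second step shows that $\Phi|_N$ is an isomorphism onto $M_{l_0}=\{y:\operatorname{val}(y)\geqslant l_0\}$. I would argue as in Proposition~\ref{prop invertible}. The vectors $\Phi(e_{n_l})$, $l\geqslant l_0$, have pairwise distinct valuations covering every integer $\geqslant l_0$. The same triangular back-substitution then gives surjectivity onto $M_{l_0}$, and injectivity follows from the leading coefficient. Continuity of the inverse comes either from the open mapping theorem or directly from the triangular structure.

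The third step builds the conjugacy. Define $J:\omega\to\omega$ by sending $g+x$ (with $g\in G$, $x\in N$) to a vector whose first $\dim G$ coordinates record $g$ in a fixed basis. The remaining coordinates are filled by $\Phi(x)$, shifted so that coordinate $l_0$ of $\Phi(x)$ lands right after the $G$-block. After the index shift, $J$ is a direct sum of a linear isomorphism $G\to\mathbb{C}^{\dim G}$ and $\Phi|_N$, so it is an invertible operator on $\omega$.

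The final step computes $S = JTJ^{-1}$ and shows $S - B$ has only finitely many nonzero columns. I expect this bookkeeping to be the main obstacle. Take $x\in N$ and write $Tx = g' + x'$. Then $\Phi(x') = B\Phi(x) - \Phi(g')$. The map $x\mapsto g'$ is a continuous map from $N$ into the finite-dimensional space $G$, so it depends only on finitely many coordinates of $x$. By Proposition~\ref{rang fini,colonnes}, this term contributes a finite rank operator with finitely many nonzero columns. Likewise $B\Phi(x)$ differs from an element of $M_{l_0}$ only in coordinate $l_0-1$, and that coordinate equals $P_0T^{l_0}x$, again a single continuous functional, hence of finite rank. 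Apart from these corrections, $\Phi(x')$ is $B$ applied to $\Phi(x)$. After the index shift, the only issue is the boundary between the $G$-block and the $N$-block. That boundary changes only finitely many rows and columns, and the action on the finite-dimensional $G$-part is finite rank. Therefore $S = B + R$ with $R$ of finite rank.

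This proves the proposition with $\lambda=0$. One must check that all the index shifts are consistent with a single copy of $B$, and the only places where they can disagree are finitely many rows and columns at the boundary, so this is routine.
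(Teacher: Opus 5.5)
Your argument is correct, and it takes a genuinely different route from the paper.

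\textbf{The paper's route.} The paper splits $\omega=\operatorname{span}(C)\oplus\overline{\operatorname{span}}(D)$ with $D=\bigcup_{l\geqslant L_0}A_l$, where $L_0$ is chosen so that $P_G T e_n=0$ for $n\in D$. It writes $T=R+S$ with $S$ the compression $T_{DD}$. It then checks by hand that, in the basis $v_l=e_{n_{L_0+l}}$, $T_{DD}$ has a Hessenberg matrix with nonzero superdiagonal. Finally, by an iterative back-substitution with a limiting procedure, it constructs a chain $u_l$ with $T_{DD}u_l=u_{l-1}$ and $\operatorname{val}(u_l)=l$, and conjugates by $v_l\mapsto u_l$.

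\textbf{Your route.} You start from the map $\Phi=(P_0T^l)_{l\geqslant 0}$, which intertwines $T$ with $B$ for free. The hypotheses ($A$ cofinite, $|A_l|=1$ for $l\geqslant l_0$) are exactly what make $\Phi$ lower triangular with nonzero diagonal from the finite-codimensional coordinate subspace $N$ onto $M_{l_0}$. Hence $\Phi|_N$ is an isomorphism by the argument of Proposition~\ref{prop invertible}. The only deviations from the shift are:
\begin{itemize}
\item the action on the finite-dimensional block $G$;
\item the map $x\mapsto P_GTx$ on $N$, which is finite rank with finitely many nonzero columns because it is continuous into a finite-dimensional space;
\item the boundary term, which you can check explicitly: with $d=\dim G$ and $y=(a,z)$, one has $B(a,z)-(0,Bz)=(a_1,\ldots,a_{d-1},z_0,0,\ldots)$, a finite-rank operator.
\end{itemize}

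\textbf{Comparison.} Your approach avoids the choice of $L_0$, the Hessenberg computation and the iterative construction of the $u_l$. The new basis is simply a basis of $G$ followed by the vectors $(\Phi|_N)^{-1}(e_l)$, $l\geqslant l_0$. It also makes transparent why the hypothesis is the right one: it is precisely the condition under which the ``observability map'' of the functional $P_0$ is invertible up to a finite-dimensional piece. The paper's approach stays within the explicit matrix manipulations used throughout the article and displays the conjugate matrix directly. Both give $\lambda=0$, consistent with Remark~\ref{no Id for B}.
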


\begin{proof}
Let $G = \bigcup_{l=0}^{l_0-1} A_l \cup (\mathbb{Z}_+ \backslash A)$ where $A_0 = \{0\}$ and $P_G$ the projection on $\operatorname{span}\{e_n ; n \in G\}$. 
Since $G$ is a finite set, there exists by continuity of $T$ an integer $L_0 \geqslant l_0$ such that for every $n\in \bigcup_{l \geqslant L_0}A_{l}$, we have $P_G(Te_n)=0$.

We now consider $C=\bigcup_{l=0}^{L_0-1} A_l \cup (\mathbb{Z}_+ \backslash A) $
and the infinite set $D = \bigcup_{l \geqslant L_0} A_l$.
We also let $\operatorname{span}(C) = \operatorname{span}\{e_n ; n \in C\}$ and $\operatorname{\overline{span}}(D) = \operatorname{\overline{span}}\{e_n ; n \in D\}$ so that
\[\omega=\operatorname{span}(C)\oplus \operatorname{\overline{span}}(D).\] 
In other words, for all $x \in \omega$, there exists a unique $y \in \operatorname{span}(C)$ and a unique $z \in \operatorname{\overline{span}}(D)$ such that $x = y + z$. We will write $P_C x=y$ and $P_D x=z$ We can then write
$$
Tx = T(P_Cx) + T(P_Dx) = T_C (P_Cx) + T_D(P_Dx)
$$ 
where $T_C : \operatorname{span}(C) \rightarrow \omega : y \rightarrow Ty$ and $T_D : \operatorname{\overline{span}}(D) \rightarrow \omega : z \rightarrow Tz$.
Since $\operatorname{span}(C)$ is of finite dimension,  $T_C$ is a finite rank operator. 
Moreover, for $z \in \operatorname{\overline{span}}(D)$, we can write
$$
T_D z = T_{DC} z + T_{DD} z
$$
where $T_{DC} : \operatorname{\overline{span}}(D) \rightarrow \operatorname{span}(C) : z \rightarrow P_C Tz$ is a finite rank operator and $T_{DD} : \operatorname{\overline{span}}(D) \rightarrow \operatorname{\overline{span}}(D) : z \rightarrow P_D Tz$.
We then have that 
$$
Tx = Rx + Sx
$$ 
where $R$ is the finite rank operator given by $Rx = T_C (P_C x) + T_{DC} (P_D x)$ and $S$ is the linear operator defined by $Sx = 0$ if $x \in \operatorname{span}(C)$ and defined by $Sx = T_{DD}x$ if $x \in \operatorname{\overline{span}}(D)$. 

Let us focus on the operator $T_{DD}$. 
For every $l \geqslant l_0$, we know that $A_{l}=\{n_l\}$ for some $n_l$. For what follows, let us denote $v_l = e_{n_{L_0+l}}$ for all $l \geqslant 0$. We first show that the action of $T_{DD}$ can be represented by the following matrix
$$
\bordermatrix{
     & v_0 & v_1 & v_2 & v_3 \cr
    v_0 & \alpha_{0,0} & \alpha_{0,1} & 0 & 0 &\cdots \cr
    v_1 & \alpha_{1,0} & \alpha_{1,1} & \alpha_{1,2} & 0 &\cdots \cr
     & \vdots & \vdots & \vdots & \ddots \cr
     & \vdots & \vdots & \vdots & \vdots }
$$
where $\alpha_{l,l+1} \neq 0$ for all $l \geqslant 0$.
Indeed, for every $l \geqslant 0$, we know that $P_G(T v_l)=0$ by definition of $L_0$ and thus that $T v_l=\sum_{j=l_0}^{\infty}a_j e_{n_j}$. 
However, since $P_0(T^j v_l)=0$ for every $j<L_0+l$, we deduce that $a_j=0$ for every $l_0 \leqslant j < L_0+l-1$ (as otherwise by considering the smallest $l_0 \leqslant j<L_0+l-1$ such that $a_j\ne 0$, we would get $P_0(T^{j+1} v_l)=P_0(T^{j} a_je_{n_j})\ne 0$). Moreover, since $P_0(T^{L_0+l} v_l)\ne 0$, we deduce that $a_{L_0+l-1}$ is nonzero as desired.\\


For convenience, we will now see $\operatorname{\overline{span}}(D)$ as a copy of $\omega$ related to the basis $(v_l)_{l \geqslant 0}$ and we will denote by $P_l$ the projection given by $P_l(\sum_{j \geqslant 0}x_jv_j)=x_l$. 
Our goal consists in finding a sequence $(u_l)_{l \geqslant 0}$ with $u_0=v_0$ such that $T_{DD} u_l = u_{l-1}$ for all $l \geqslant 1$ and such that $\operatorname{val}(u_l) = l$ where the valuation is here considered relatively to the sequence $(v_l)_{l \geqslant 0}$ . \\

Let $u_0 = v_0$.
We start by looking for a vector $u_1$ such that $T_{DD}u_1 = u_0$ and $\operatorname{val}(u_1) = \operatorname{val}(v_1)$.
Since $\alpha_{0,1} \neq 0$ and since $P_1 T_{DD} v_2 = \alpha_{1,2} \neq 0$, we can consider $y_1 = \frac{1}{\alpha_{0,1}} v_1 - \frac{\alpha_{1,1}}{\alpha_{0,1}\alpha_{1,2}} v_2$ so that $P_0 T_{DD} y_1 = 1$ and $P_1 T_{DD} y_1 = 0$. 

Let $k \geqslant 2$. 
We assume that we have $y_{k-1} = \frac{1}{\alpha_{0,1}} v_1 - \sum_{l=2}^{k} c_{1,l} v_l$ for some $c_{1,2}, \ldots,c_{1,k} \in \mathbb{C}$ such that $P_0 T_{DD} y_{k-1} = 1$ and $P_1 T_{DD} y_{k-1} = \ldots = P_{k-1} T_{DD} y_{k-1} = 0$.
We can now fix 
$$
y_k = y_{k-1} - \frac{P_k T_{DD} y_{k-1}}{\alpha_{k,k+1}} v_{k+1}.
$$
Then we have that $y_k = \frac{1}{\alpha_{0,1}} v_1 - \sum_{l=2}^{k+1} c_{1,l} v_l$ with $c_{1,k+1} = \frac{P_k T_{DD} y_{k-1}}{\alpha_{k,k+1}}$.
We also have
$$
P_0 T_{DD} y_k = P_0 T y_{k-1} - \frac{P_k T_{DD} y_{k-1}}{P_k T_{DD} v_{k+1}} P_0 T_{DD} v_{k+1} = 1
$$
since $P_0 T_{DD} v_{k+1} = 0$ and for any $1 \leqslant l \leqslant k$, $P_l T_{DD} y_k=0$.
By letting $u_1 = \lim_{k \to \infty} y_k = \frac{1}{\alpha_{0,1}} v_1 - \sum_{l \geqslant 2} c_{1,l} v_l$, we then get that $T_{DD}u_1 = u_0$ and that $\operatorname{val}(u_1) = 1$.

Let $j \geqslant 2$. 
Assume that we have already found $u_1, \ldots, u_{j-1}$ such that for every $1 \leqslant l \leqslant j-1$, $Tu_l = u_{l-1}$  and $u_l = c_{l,l} v_l + \sum_{j \geqslant l+1} c_{l,j} v_j$ for some $c_{j,l} \in \mathbb{K}$.
We then have
\begin{align*}
T_{DD}v_j &= \sum_{l \geqslant j-1} \alpha_{l,j} v_l = \alpha_{j-1,j} v_{j-1} + \sum_{l \geqslant j} \alpha_{l,j} v_l \\
&= \frac{\alpha_{j-1,j}}{c_{j-1,j-1}} \big(c_{j-1,j-1}v_{j-1} + \sum_{l \geqslant j} c_{j,l} v_l \big) + \sum_{l \geqslant j} (\alpha_{l,j}- \frac{\alpha_{j-1,j}}{c_{j-1,j-1}} c_{j,l}) v_l \\
&=\frac{\alpha_{j-1,j}}{c_{j-1,j-1}} u_{j-1} + \sum_{l \geqslant j} \beta_{l,j} v_l
\end{align*}
where $\beta_{l,j} = \alpha_{l,j}- \frac{\alpha_{j-1,j}}{c_{j-1,j-1}} c_{j,l} $. We can then repeat the previous construction to get $u_j$.

Let $P$ be the linear application on $\omega$ defined for all $l \geqslant 0$ by $P(v_l) = u_l$ and for all $n\in C$ by $P(e_n) = e_n$.
The operator $P$ is then invertible and we deduce that $S$ is conjugate to the operator whose matrix is given by
\[\left(\vcenter{\hbox{
    \begin{tikzpicture}[x=0.7cm, y=-0.4cm]
        \draw[opacity=0] (0.5, 0.5) rectangle (6.5, 6.5);
        \node[font=\LARGE] at (1.5, 1.5) {$0$};
        \node[font=\LARGE] at (4.5, 1.5) {$0$};
        \node[font=\LARGE] at (1.5, 4.5) {$0$};
        \draw (2.5, 6.5) -- (2.5,0);
        \draw (0, 2.5) -- (6.5,2.5);
        \node at (3,3) {$\alpha_{0, 0}$};
        \node at (4,3) {$1$};
        \node at (5,4) {$1$};
        \draw[dots] (5.5, 4.5) -- (6.5, 5.5);
        \node at (3,4) {$\alpha_{1,0}$};
        \node at (3,5) {$\alpha_{2,0}$};
        \draw[dots] (3, 5.5) -- (3, 6.5);
        \node at (4,4) {$0$};
        \node at (4, 5) {$0$};
        \draw[dots] (4, 5.5) -- (4, 6.5);
        \node at (5,5) {$0$};
        \draw[dots] (5.5, 5.5) -- (6.5, 6.5);
        \draw[dots] (5, 5.5) -- (5, 6.5);
        \node at (5, 3) {$0$};
        \draw[dots] (5.5, 3) -- (6.5, 3);
        \draw[dots] (5.5, 3.5) -- (6.5, 4.5);
    \end{tikzpicture}
}}\right).\]
Moreover, since $T=R+S$ where $R$ is a finite rank operator and since the matrix of any finite rank operator has only a finite number of nonzero columns (Proposition~\ref{rang fini,colonnes}), we deduce that $T$ is conjugate to an operator whose matrix is given by 
\[\left(\vcenter{\hbox{
    \begin{tikzpicture}[x=0.7cm, y=-0.4cm]
        \draw[opacity=0] (-0.5,-0.5) rectangle (4.5,4.5);
        \draw (1.5,-0.5) -- (1.5, 5.5);
        \draw (-0.5,1.5) -- (5.5, 1.5);
        \node[font=\LARGE] at (0.5,3.5) {$*$};
        \node[font=\LARGE] at (0.5,0.5) {$*$};
        \node at (2, 2) {$0$};
        \node at (3, 3) {$0$};
        \node at (2, 3) {$0$};
        \draw[dots] (2, 3.5) -- (2, 4.5);
        \draw[dots] (3, 3.5) -- (3, 4.5);
        \draw[dots] (3.5, 3.5) -- (4.5, 4.5);
        \node at (2, 1) {$1$};
        \node at (3, 1) {$0$};
        \draw[dots] (3.5, 1) -- (4.5, 1);
        \node at (3, 2) {$1$};
        \draw[dots] (3.5, 2.5) -- (4.5, 3.5);
        \node[font=\LARGE] at (3.5,0) {$0$};
    \end{tikzpicture}
}}\right).\]
Such a matrix is equal to $B + R'$ for some finite rank operator $R'$ and we can thus conclude.
\end{proof}

\begin{rem}\label{no Id for B}
At the end of the previous proof we get that $T$ is conjugate to $B+R$ where $R$ is a finite rank operator and the term $\lambda \operatorname{Id} $ is not necessary. It comes from the fact that each operator $B+\lambda \operatorname{Id}$ is conjugate to $B+R$ for some finite rank operator $R$.
\end{rem}

In conclusion, we obtain the following statement.

\begin{thm}
Let $T \in L(\omega)$. If there exists $k \geqslant 0$ such that the set 
\[\{n \ne k \  ; \  P_k T^{l}e_n \neq 0 \ \text{for some $l$}\} \ \text{is infinite,}\]
then either $T$ possesses a closed invariant half-space or there exist $\lambda \in \mathbb{C}$ and an operator $R \in L(\omega)$ of finite rank such that $T$ is conjugate to $B+ \lambda \operatorname{Id}  + R$.
\end{thm}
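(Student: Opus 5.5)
The plan is to reduce to the case $k=0$ by a conjugacy and then assemble the preceding propositions of this subsection.

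\textbf{Reduction to $k=0$.} Let $Q$ be the permutation operator exchanging $e_0$ and $e_k$ and fixing every other $e_n$. This is an invertible operator on $\omega$ (for instance by Proposition~\ref{prop invertible}), and $Q=Q^{-1}$. Put $S=QTQ$. Then $S^l=QT^lQ$ and $P_0 Q = P_k$. Hence for $n\neq 0$ we have
\[
P_0 S^l e_n = P_k T^l e_{\sigma(n)},
\]
where $\sigma$ is the transposition of $0$ and $k$. So the set $A$ associated with $S$ is the image under $\sigma$ of $\{n\neq k \, ; \, P_kT^le_n\neq 0 \text{ for some } l\}$, and in particular it is infinite. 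By Proposition~\ref{conj preserve DEI}, $T$ has a closed invariant half-space if and only if $S$ does. Moreover, if $S$ is conjugate to $B+\lambda\operatorname{Id}+R$, then so is $T$, since a composition of conjugacies is a conjugacy. We may therefore assume $k=0$, that is, $A$ is infinite.

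\textbf{Case split on $A$.} If $A$ is infinite but not cofinite, Proposition~\ref{notcofinite} gives a closed invariant half-space and we are done. Otherwise $A$ is cofinite. By Proposition~\ref{|A_l|>= 1}, $T$ is conjugate to an operator $T'$ for which every $A_l$ is non-empty. If infinitely many $A_l$ (for $T'$) have at least two elements, Proposition~\ref{|A_l| >= 2} gives a closed invariant half-space for $T'$, hence for $T$. If not, there is $l_0$ with $|A_l|=1$ for all $l\geqslant l_0$, and we want to apply Proposition~\ref{|A_l| = 1} to $T'$. That conjugates $T'$, hence $T$, to $B+R+\lambda\operatorname{Id}$.

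\textbf{Main obstacle.} The delicate point is checking that the hypothesis ``$A$ is cofinite'' survives the conjugacy of Proposition~\ref{|A_l|>= 1}, because that proposition changes the basis.

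For $T'=P^{-1}TP$ with $P$ fixing $e_0$, one has $P_0 P^{-1} = P_0$ only up to the structure of $P$. However, the invariant half-space argument of Proposition~\ref{notcofinite} really only uses that $A'$ is infinite and non-cofinite. So if $A'$ (for $T'$) fails to be cofinite, we apply Proposition~\ref{notcofinite} directly to $T'$ and transfer the conclusion back by Proposition~\ref{conj preserve DEI}.

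Thus the clean ordering is:
\begin{enumerate}
\item Conjugate first, using Proposition~\ref{|A_l|>= 1}; the resulting $A'$ is still infinite, since all $A'_l$ are non-empty.
\item If $A'$ is not cofinite, apply Proposition~\ref{notcofinite}.
\item If infinitely many $|A'_l|\geqslant 2$, apply Proposition~\ref{|A_l| >= 2}.
\item Otherwise apply Proposition~\ref{|A_l| = 1}.
\end{enumerate}
This ordering sidesteps the preservation issue entirely. The only remaining thing to verify is that Proposition~\ref{|A_l|>= 1} requires only that $A$ be infinite, which holds by the reduction to $k=0$.
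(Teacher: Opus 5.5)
Your proof is correct and follows essentially the paper's route: swap $e_0$ and $e_k$ to make $A$ infinite, then combine Propositions~\ref{notcofinite}, \ref{|A_l|>= 1}, \ref{|A_l| >= 2} and \ref{|A_l| = 1}, transferring conclusions back through Proposition~\ref{conj preserve DEI}. Your reordering (conjugate via Proposition~\ref{|A_l|>= 1} first, and only then test whether the new $A$ is cofinite) is slightly cleaner than the paper. The paper tests cofiniteness before that conjugacy and tacitly relies on it being preserved, which does hold: the conjugating map fixes $e_0$ and maps $\overline{\operatorname{span}}\{e_n : n\geqslant 1\}$ into itself, so $P_0P^{-1}=P_0$.
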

\begin{proof}
Let $k \geqslant 0$ such that the set 
\[\{n \ne k \  ; \  P_k T^{l}e_n \neq 0 \ \text{for some $l$}\}\]
is infinite. 
By considering the invertible operator $P$ defined by $Pe_0=e_k$, $Pe_k=e_0$ and $Pe_n=e_n$ for all $n\notin \{0,k\}$, we deduce that $T$ is conjugate to an operator $S$ such that \[A:=\{n \geqslant 1 \  ; \  P_0 S^{l}e_n \neq 0 \ \text{for some $l$}\} \ \text{is infinite.}\]
It then follows from Proposition~\ref{notcofinite} that if $A$ is not cofinite then $S$ possesses a closed invariant half-space and thus by Proposition~\ref{conj preserve DEI}, $T$ possesses a closed invariant half-space. 
Moreover, if we're going to conjugate again, we can assume thanks to Proposition~\ref{|A_l|>= 1} that for every $l \geqslant 1$, 
$$
|A_l| = |\{n \geqslant 1 \ ; \ P_0 S^l e_n \neq 0 \text{ and } P_0 S^{l'} e_n = 0 \text{ for all } l' < l \}| \geqslant 1 
$$ and it follows from Proposition~\ref{|A_l| >= 2} and Proposition~\ref{|A_l| = 1} that either $S$ possesses a closed invariant half-space or is conjugate to $B+ \lambda \operatorname{Id} + R$ for some $\lambda \in \mathbb{C}$ and some operator $R$ of finite rank. 
We then get the same conclusion for $T$.
\end{proof}

\subsection{$A$ is finite}\label{SubSecFin}

If $A$ is finite, it means that only a finite number of elements $e_n$ satisfies $P_0T^le_n\ne 0$ for some $l \geqslant 1$.
We can in fact assume that it is the case for any projection $P_k$ because otherwise, if we were to swap $e_k$ and $e_0$, we would be in the previous case. Therefore, it remains to deal with the operators $T$ such that for all $k \geqslant 0$, the set $C_k$ is finite where 
\[C_k= \{n \ne k \  ; \  P_k T^{l}e_n \neq 0 \ \text{for some $l$}\}.\]
Our first goal is actually to show that such an operator $T$ is conjugate to an operator whose the associated matrix is a lower triangular matrix. This will be done by induction on the sets $C_k$.

We first let $u_0 = e_0$ and we consider $C_0$. If $C_0$ is empty, it means that the matrix associated to $T$ is given by 
\[\begin{pmatrix}
    * & 0 & 0 & \cdots \cr
    * & * & * & \cdots \cr
    * & * & * & \cdots \cr
    \vdots & \vdots & \vdots & \ddots  
\end{pmatrix}.\]
and we proceed to the next step. If $C_0$ is not empty and since $C_0$ is finite, there exists $k_0 \geqslant 1$ such that $|C_0| = k_0$.
Let $l \geqslant 1$. We denote $C_{0,l} = \{n \geqslant 1 \ ; \ P_0 T^l e_n \neq 0 \text{ and } P_0 T^{l'} e_n = 0 \text{ for all } l' < l \}$.
We remark that $C_{0,1}$ is then not empty and we note $C_{0,1} = \{i_{1,1}, \ldots,i_{1,k_1}\}$ with $i_{1,1} < \ldots < i_{1,k_1}$.
We fix $u_{i_{1,k_1}} = e_{i_{1,k_1}}$ and for all $1 \leqslant l < k_1$, we fix $u_{i_{1,l}} = e_{i_{1,l}} - \frac{P_0 Te_{i_{1,l}}}{P_0 Te_{i_{1,k_1}}} e_{i_{1,k_1}}$ such that $P_0 Tu_{i_{1,l}} = 0$ and such that $\operatorname{val}(u_{i_{1,l}}) = i_{1,l}$.
We now consider this new basis where the vectors $(e_{i_{1,l}})_{1 \leqslant l \leqslant k_1}$ have been replaced by $(u_{i_{1,l}})_{1 \leqslant l \leqslant k_1}$.
We still have that $C_0$ is finite with respect to this new basis and we have that the set $C_0$ (with respect to this new basis) has at most $k_0$ elements. 
However, we now have that $C_{0,1} = \{i_{1,k_1}\}$.

If $C_{0,2}$ is empty then it means that for every $n\notin\{0,i_{1,k_1}\}$ we have $P_0(T^le_n)=0$ for every $l\geqslant 0$ and thus, by swapping $e_{i_{1,k_1}}$ with $e_1$, we will get that the matrix associated to $T$ is given by 
\[\begin{pmatrix}
    * & * & 0 & \cdots \cr
    * & * & 0 & \cdots \cr
    * & * & * & \cdots \cr
    \vdots & \vdots & \vdots & \ddots  
\end{pmatrix}\]
and we proceed to the next step. 

If $C_{0,2}$ is not empty, we note $C_{0,2} = \{i_{2,1}, \ldots,i_{2,k_2}\}$ with $i_{2,1} < \ldots < i_{2,k_2}$.
We have that $P_{i_{1,k_1}} Te_{i_{2,l}} \neq 0$ for all $1 \leqslant l \leqslant k_2$.
We fix $u_{i_{2,k_2}} = e_{i_{2,k_2}}$ and for all $l < k_2$, we fix $u_{i_{2,l}} = e_{i_{2,l}} - \frac{P_{i_{1,k_1}} Te_{i_{2,l}}}{P_{i_{1,k_1}} Te_{i_{2,k_2}}} e_{i_{2,k_2}}$ such that $P_{i_{1,k_1}} Tu_{i_{2,l}} = 0$ and such that $\operatorname{val}(u_{i_{2,l}}) = i_{2,l}$.
We consider now the new basis where we replace $(e_{i_{2,l}})_{1 \leqslant l \leqslant k_2}$ by $(u_{i_{2,l}})_{1 \leqslant l \leqslant k_2}$.
We still have that $C_0$ is finite with respect to this new basis and we have that the set $C_0$ (with respect to this new basis) has at most $k_0$ elements. 
However, we now have that $C_{0,1} = \{i_{1,k_1}\}$ and $C_{0,2} = \{i_{2,k_2}\}$.
Since $C_0$ has always at most $k_0$ element, this proceed will come to an end with some $l \leqslant k_0$ such that $C_{0,l} = \emptyset$ and after some reordering, the matrix associated to $T$ will be given by
\[\begin{pmatrix}
    M_0 & 0 & 0 & \cdots \cr
    * & * & * & \cdots \cr
    * & * & * & \cdots \cr
    \vdots & \vdots & \vdots & \ddots  
\end{pmatrix}\]
where $M_0$ is a square matrix of size $l$.

We know consider the set $C_{n_1}$ where $n_1 = \min\{k \geqslant 1 \ ; \ k \notin \{0, i_{1,k_1}, \ldots, i_{l-1,k_{l-1}} \}\}$ and we fix $u_{n_1} = e_{n_1}$.
We can then restart the above process with the elements in the set $C_{n_1}\backslash\{0, i_{1,k_1}, \ldots, i_{l-1,k_{l-1}} \}$. By repeating this process, each vector $u_n$ will be well-defined and will satisfy $\operatorname{val}(u_n) = n$ so that the linear operator  $P$ given by $P(e_l) = u_l$ for all $l \geqslant 0$ is invertible by Proposition \ref{prop invertible}. Moreover, by permuting the elements of the basis (if necessary), we get that 
$T$ is conjugated to an operator whose matrix is given by
$$
T = \begin{pmatrix}
     M_0 & 0 & 0 & 0 &\cdots \cr
     * & M_1 & 0 & 0 &\cdots \cr
     * & * & M_2 & 0 &\cdots \cr
     \vdots & \vdots & \vdots & \vdots 
\end{pmatrix} 
$$
where each $M_i$ is a square matrix.
Finally we know, by using the Jordan normal form of each matrix $M_l$, that $T$ is then conjugate to a lower triangular matrix. 
We denote this lower triangular matrix by 
$$
\begin{pmatrix}
    \lambda_0 & 0 & 0 & \cdots \cr
    \alpha_{1,0} & \lambda_1 & 0 & \cdots \cr
    \alpha_{2,0} & \alpha_{2,1} &\lambda_2 & \cdots \cr
    \vdots & \vdots & \vdots & \ddots  
\end{pmatrix}.\\
$$
\text{}\\
\text{}\\
We have now two possibilities: either there exists $\lambda \in \mathbb{C}$ such that $\lambda$ appears a cofinite number of times on the diagonal or this is not the case.
\begin{itemize}
\item If some $\lambda \in \mathbb{C}$ appears cofinitely often on the diagonal then we will have to consider $T -\lambda\operatorname{Id}$ and
turn our attention to the lower diagonal of the matrix of $T-\lambda\operatorname{Id}$.
If there is an infinite number of zeros on this lower diagonal, we will show that $T$ possesses a closed invariant half-space.
However, if there is only finitely many zeros on the lower diagonal, we will show that $T$ is conjugate to $F + R + \lambda \operatorname{Id}$ for some finite rank operator $R$.
\item If there exists no $\lambda \in \mathbb{C}$ that appears cofinitely often on the diagonal, then we will show that $T$ possesses a closed invariant half-space. 
\end{itemize}

\subsubsection{There exists $\lambda \in \mathbb{C}$, such that $\lambda$ appears cofinitely often on the diagonal}

Even if it means considering $T - \lambda \operatorname{Id}$, we can always assume that there is a cofinite number of zeros on the diagonal.
The important information will be on the lower diagonal and we start with the case where there is a cofinite number of nonzero elements on the lower diagonal.

\begin{prop} \label{conj à F+R+lambda}
Let $\lambda\in \mathbb{C}$.
If there exists $k \geqslant 0$ such that for all $l \geqslant k$, we have $\lambda_l = \lambda$ and $\alpha_{l+1,l} \neq 0$ then $T$ is conjugate to $F + R + \lambda \operatorname{Id}$ for $R$ a finite rank operator and for $\lambda \in \mathbb{C}$. 
\end{prop}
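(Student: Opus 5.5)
We may replace $T$ by $T-\lambda\operatorname{Id}$, so we assume $\lambda=0$. We then have $\lambda_l=0$ and $\alpha_{l+1,l}\neq 0$ for all $l\geqslant k$, and we aim to show that $T$ is conjugate to $F+R$ with $R$ of finite rank. The plan mirrors the proof of Proposition~\ref{|A_l| = 1}, with the roles of $B$ and $F$ exchanged. We split $T=R_1+S$, where $R_1$ collects the columns of indices $<k$; it has finitely many nonzero columns, hence finite rank. The operator $S$ vanishes on $\operatorname{span}\{e_0,\ldots,e_{k-1}\}$, and its columns $j\geqslant k$ are the corresponding columns of $T$. These columns are supported in rows $\geqslant j+1$, since the diagonal entries vanish for $j\geqslant k$, and they satisfy $P_{j+1}Se_j=\alpha_{j+1,j}\neq 0$.

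The key construction is a forward-shift chain. Set $u_k=e_k$ and $u_{j+1}=Su_j$ for $j\geqslant k$. By induction, $\operatorname{val}(u_j)=j$. Indeed, $u_j$ is a combination of $e_m$ with $m\geqslant j$ and leading coefficient nonzero, so $Su_j$ is a combination of the columns $Se_m$, $m\geqslant j$. Each $Se_m$ has valuation $m+1$, and the sum converges in $\omega$ because only finitely many terms contribute to each coordinate. The leading coordinate of $Su_j$ is $P_{j+1}$ of $c_j\,\alpha_{j+1,j}e_{j+1}$, which is nonzero. Unlike in the $B$ case, no recursive solving is needed: the chain is built simply by applying $S$.

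Next we define $P$ by $Pe_n=e_n$ for $n<k$ and $Pe_n=u_n$ for $n\geqslant k$. Every valuation then occurs exactly once, so Proposition~\ref{prop invertible} shows that $P$ extends to an invertible operator on $\omega$. We have $SPe_n=Su_n=u_{n+1}=Pe_{n+1}$ for $n\geqslant k$, while $SPe_n=0$ for $n<k$. Consequently $P^{-1}SP$ coincides with $F$ on $e_n$ for $n\geqslant k$ and is zero on $e_0,\ldots,e_{k-1}$; it therefore differs from $F$ by a finite-rank operator, namely $F$ restricted to the first $k$ columns. Moreover, $P^{-1}R_1P$ has finite rank. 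Hence $P^{-1}TP=F+R$ with $R$ of finite rank, and adding back $\lambda\operatorname{Id}$ gives the statement.

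The main point to check carefully is continuity and well-definedness. First, $S$ must be a genuine operator: it is $T$ minus a finite-rank continuous operator, namely $T$ composed with the projection onto the first $k$ coordinates, so it is continuous. Second, the hypotheses of Proposition~\ref{prop invertible} must hold for $P$. Finally, we have to keep in mind that this whole argument is carried out on the lower triangular conjugate of $T$ constructed just before the statement, and then transfer it back to $T$. This transfer works because conjugacy preserves the form ``$F+\text{finite rank}+\lambda\operatorname{Id}$'', by the remark at the end of Section~\ref{Sec2}.
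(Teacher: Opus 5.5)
Your proposal is correct and takes essentially the same route as the paper. The paper also splits $T-\lambda\operatorname{Id}$ into a finite-rank part made of the first $k$ columns plus an operator $S$ with $\operatorname{val}(Sx)=\operatorname{val}(x)+1$, builds the chain $u_k=e_k$, $u_{l+1}=Su_l$, and then conjugates by the invertible $P$ with $Pe_l=u_l$ for $l\geqslant k$ and $Pe_l=e_l$ for $l<k$, so that $S$ becomes $F$ up to finitely many columns. Your closing remark about transferring the result back from the lower triangular conjugate correctly makes explicit the paper's tacit identification of $T$ with that conjugate.
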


\begin{proof}
Let $k \geqslant 0$ such that for all $l \geqslant k$, we have $\lambda_l = \lambda$ and $\alpha_{l+1,l} \neq 0$, then we have that $T = \lambda \operatorname{Id} + R + S$ where $R$ is a finite rank operator and where $S$ is defined by $Sx = 0$ if $x \in \operatorname{span}\{e_0,\ldots,e_{k-1}\}$ and $Sx = (T-\lambda \operatorname{Id})x$ if $x \in \operatorname{\overline{span}}\{e_n \ ; \ n \geqslant k\}$. We remark that for every nonzero vector $x\in \operatorname{\overline{span}}\{e_n \ ; \ n \geqslant k\}$, we have
\[\text{val}(Sx)=\text{val}(x)+1.\]
Therefore, if we let $u_k=e_k$ and if we let for all $l \geqslant k$, $u_{l+1}=Su_l$ then for all $l \geqslant k$, we have  $\operatorname{val}(u_l) = l$.

We now consider the linear application $P$ defined for all $l \geqslant k$ by $P(e_l) = u_l$ and for all $l < k$ by $P(e_l) =e_l$.
By Proposition \ref{prop invertible}, $P$ is invertible and by using this operator, we have that $S$ is conjugate to the operator whose matrix is given by
\[\left(\vcenter{\hbox{
    \begin{tikzpicture}[x=0.5cm, y=-0.4cm]
        \draw[opacity=0] (0.5, 0.5) rectangle (6.5, 6.5);
        \node[font=\LARGE] at (1.25, 1.5) {$0$};
        \node[font=\LARGE] at (4.5, 1.5) {$0$};
        \node[font=\LARGE] at (1.25, 4.5) {$0$};
        \draw (2.5, 6.5) -- (2.5,0); 
        \draw (0,2.5) -- (6.5, 2.5);
        \node at (3,3) {$0$};
        \node at (3,4) {$1$};
        \node at (4,4) {$0$};
        \node at (4, 5) {$1$};
        \draw[dots] (4.5, 5.5) -- (5.5, 6.5);
        \node at (5,5) {$0$};
        \draw[dots] (5.5, 5.5) -- (6.5, 6.5);
    \end{tikzpicture}
}}\right).\]
Moreover, we have 
$$
P^{-1}(T-\lambda \operatorname{Id})P=P^{-1}(R +S)P=R'+P^{-1} SP
$$
where $R'$ is a finite rank operator and thus by Proposition~\ref{rang fini,colonnes}, the operator
 $T-\lambda \operatorname{Id}$ is conjugate to
\[\left(\vcenter{\hbox{
    \begin{tikzpicture}[x=0.5cm, y=-0.4cm]
        \draw[opacity=0] (-0.5,-0.5) rectangle (4.5,4.5);
        \draw (1.5,-0.5) -- (1.5, 5.5);
        \draw (-0.5,1.5) -- (5.5, 1.5);
        \node[font=\LARGE] at (0.5,3.5) {$*$};
        \node[font=\LARGE] at (0.5,0.5) {$*$};
        \node at (2, 2) {$0$};
        \node at (3, 3) {$0$};
        \node at (2, 3) {$1$};
        \node at (3, 4) {$1$};
        \draw[dots] (3.5, 3.5) -- (5, 5);
        \draw[dots] (3.5, 4.5) -- (4.5, 5.5);
        \node[font=\LARGE] at (3.5,0.5) {$0$};
    \end{tikzpicture}
}}\right).\]
Such a matrix is equal to $F + R''$ where $R''$ is a finite rank operator and we get the desired result.
\end{proof}


It remains now to study the case where there is an infinite number of zeros on the lower diagonal.
In this case, we will show that $T$ possesses a closed invariant half-space.

\begin{prop}\label{cupkernel}
Let $\lambda\in \mathbb{C}$. If there exists $k \geqslant 0$ such that for all $l \geqslant k$, we have $\lambda_l = \lambda$ and if there are infinitely many $l \geqslant 0$ such that $\alpha_{l+1,l} = 0$ then $T$ possesses a closed invariant half-space. 
\end{prop}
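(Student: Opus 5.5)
Replacing $T$ by $T-\lambda\operatorname{Id}$ changes neither the invariant subspaces nor the hypotheses, so I would assume $\lambda=0$. Write $S$ for the resulting lower triangular matrix. It has $\lambda_l=0$ for $l\geqslant k$, and there is an infinite increasing sequence $l_1<l_2<\cdots$, with $l_1\geqslant k$, such that $\alpha_{l_i+1,l_i}=0$. The key observation is that on $M_k=\{x:\operatorname{val}(x)\geqslant k\}$ the operator $S$ strictly raises the valuation, and that $\operatorname{val}(Se_{l_i})\geqslant l_i+2$.

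The candidate I would try first is
$$
M=\overline{\bigcup_{n\geqslant 1}\ker S^n}.
$$
This is a closed subspace, and it is $S$-invariant because each $\ker S^n$ is. I would check the half-space property in two steps.

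\emph{Infinite dimension.} For each $i$, I would produce a nonzero vector $x_i\in\ker S^{n_i}$ for some $n_i$, with $\operatorname{val}(x_i)=l_i$ (or at least with pairwise distinct valuations). Start from $e_{l_i}$; by the zero on the subdiagonal, $Se_{l_i}$ already skips coordinate $l_i+1$. Then correct $e_{l_i}$ by higher coordinates, solving row by row exactly as in the construction of the vectors $u_l$ in Proposition~\ref{|A_l| = 1}, so that some power kills it. Distinct valuations then give infinite dimension.

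\emph{Infinite codimension.} I would apply Lemma~\ref{fait}. I would show that the projection $P_{\{0,\ldots,l_i\}}(M)$ misses a number of directions that tends to infinity. The intended mechanism is that every zero on the subdiagonal produces a functional built from $P_{l_i+1}$ and the rows of powers of $S$ (equivalently, a suitable vector in $c_{00}=\omega^*$ invariant under the transpose) which vanishes on every $\ker S^n$. These functionals are linearly independent because they involve distinct leading coordinates, as in the proofs of Propositions~\ref{notcofinite} and~\ref{|A_l| >= 2}. Concretely, I would work with the transpose $S^{t}$, which is upper triangular and acts on $c_{00}$. For $m=l_i+1$, the image $S^{t}e_m^*$ lies in $\operatorname{span}\{e_0^*,\ldots,e_{m-2}^*\}$, and this is what should give an $S^t$-invariant, infinite-dimensional, infinite-codimensional subspace $W$ of $c_{00}$. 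Its annihilator would then be the desired half-space.

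I expect the main obstacle to be controlling the codimension. The orbit of a vector under $S$ or $S^t$ can a priori fill up all lower coordinates, so the zero at $\alpha_{l_i+1,l_i}$ only gives a ``defect'' of one dimension locally. I must make sure these defects accumulate rather than being absorbed by later vectors. I would try to get this by passing to a sparse subsequence of the $l_i$. Between consecutive chosen zeros, the finite-dimensional invariant spaces $E_{l}=\operatorname{span}\{e_0^*,\ldots,e_l^*\}$ for $S^t$ can be compared. The nested subspaces $W_i\subseteq E_{l_i+1}$ would be built inductively, each time losing at least one more dimension in codimension, so that Lemma~\ref{fait} applies. If the kernel-union candidate turned out too large (for instance if it is dense), I would fall back to the span of these inductively built $W_i$ and take $M=\bigcap_{f\in W}\ker f$.
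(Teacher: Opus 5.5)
Your proposal has a genuine gap at its first step. The candidate $M=\overline{\bigcup_{n\geqslant1}\ker S^n}$ is in general neither infinite-dimensional nor of infinite codimension. The claim that $e_{l_i}$ can be corrected ``row by row'' into a vector killed by some power of $S$ is false.

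Take $T=F^2$. It is lower triangular with zero diagonal and $\alpha_{l+1,l}=0$ for every $l$, so it satisfies the hypotheses. But it is injective, so every $\ker S^n$ is $\{0\}$ and $M=\{0\}$.

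In the opposite direction, let $Se_{2j}=e_{2j+1}$ and $Se_{2j+1}=0$. Then $\alpha_{l+1,l}=0$ for every odd $l$, but $S^2=0$, so $M=\omega$. Hence no nonzero functional vanishes on all the $\ker S^n$, and the codimension mechanism you describe cannot exist.

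The fallback through $S^t$ on $c_{00}$ is a legitimate reformulation, since closed subspaces of $\omega$ correspond to subspaces of $c_{00}$ via annihilators. However, you only observe that each zero $\alpha_{l_i+1,l_i}=0$ produces a one-dimensional local defect. How these defects accumulate is the step you yourself flag as the main obstacle. It is the entire content of the proposition, and you do not supply it.

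The missing idea is a dichotomy. No single construction works in both regimes: kernel-type subspaces fail for $F^2$, and the cyclic subspace of a single vector is finite-dimensional when $S$ is nilpotent on the tail.

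First case: $\bigcup_n\ker S^n$ contains vectors of arbitrarily large valuation. Then so does $\ker S$. For such an $x$ with $\operatorname{val}(x)\geqslant k$, take $j$ minimal with $S^jx=0$. Then $S^{j-1}x$ is a nonzero vector of $\ker S$ with valuation at least $\operatorname{val}(x)$, because $S$ raises valuations on the tail. Every closed subspace of $\ker S$ is invariant. The closed span of kernel vectors with valuations $w_1<w_2<\cdots$ and $w_j-j\to+\infty$ is a half-space by Lemma~\ref{fait}.

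Second case: there is $N\geqslant k$ such that no nonzero vector of valuation $\geqslant N$ is annihilated by a power of $S$. Take the cyclic subspace $M=\overline{\operatorname{span}}\{S^ne_N:n\geqslant0\}$. The valuations $v_n=\operatorname{val}(S^ne_N)$ are strictly increasing, so $M$ is infinite-dimensional. Whenever $\alpha_{l+1,l}=0$ with $l\geqslant N$, one of $l,l+1$ is missing from $\{v_n\}$, because $v_n=l$ forces $v_{n+1}\geqslant l+2$. Since $\dim P_{\{0,\ldots,K\}}(M)\leqslant|\{n:v_n\leqslant K\}|$, the deficit $K+1-\dim P_{\{0,\ldots,K\}}(M)$ is at least $N$ plus half the number of such zeros in $[N,K)$. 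This tends to infinity, and Lemma~\ref{fait} gives infinite codimension. This is exactly the paper's argument.
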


\begin{proof}
Without loss of generality, we assume that for all $l \geqslant 0$, $\lambda_l = 0$ and thus that for all nonzero $x \in \omega$, $\operatorname{val}(Tx) > \operatorname{val}(x)$.

We first remark that if the set $\bigcup_{n \geqslant 1} \ker (T^n)$ contains elements with arbitrarily big valuation, then $T$ possesses a closed invariant half-space. Indeed, if $x$ is a nonzero element of valuation strictly greater than $j$ in $\operatorname{Ker} T^n$ for some $n \geqslant 2$, we can consider the first $k \leqslant n$ such that $x \in \operatorname{Ker}(T^k)$ and we have that $T^{k-1} x$ is still a nonzero element of valuation strictly greater than $j$ in $\operatorname{Ker}T$ since $T$ is given by a lower triangular matrix. We deduce that we can extract a closed invariant half-space from the kernel of $T$.

We will thus now assume that there exists $N$ such that for all nonzero vector $x$ in $\omega$, if $\text{val}(x) \geqslant N$, then $T^jx \ne 0$ for every $j \geqslant 0$. 
Let $v_n=\text{val}(T^ne_N)$. 
It follows that $(v_n)_{n \geqslant 0}$ is an strictly increasing sequence satisfying $v_0=N$. 
Moreover, if $\alpha_{l+1,l}=0$ and $v_n=l$ then $v_{n+1} \geqslant v_n+2$. In other words, if $\alpha_{l+1,l}=0$ and $l\in \{v_n : n\geqslant 0\}$ then $l+1\notin \{v_n : n\geqslant 0\}$.

We consider the closed invariant subspace $M = \overline{\operatorname{span}}\{T^n e_N \ ; \ n \geqslant 0\}$.
We have that $M$ is an infinite-dimensional subspace and we can also deduce the infinite codimension of $M$ thanks to Lemma \ref{fait}.
Indeed, we have for all $k \geqslant N$ that
\begin{align*}
\operatorname{dim}(P_{\{0,\ldots,k\}}(M)) &= \operatorname{dim}(\operatorname{span}\{T^n e_N \ ; \ \operatorname{val}(T^n e_N) \leqslant k\})\\
&\leqslant |\{n:N \leqslant v_n \leqslant k\}|\\
&\leqslant k+1-N -\frac{|\{N \leqslant l< k: \alpha_{l+1,l}=0\}|}{2}
\end{align*}
and thus 
$$
k+1 - \operatorname{dim}(P_{\{0,\ldots,k\}}(M)) \geqslant N+ \frac{|\{N \leqslant l< k: \alpha_{l+1,l}=0\}|}{2}\underset{k \rightarrow + \infty}{\longrightarrow} +\infty.
$$

\end{proof}

We now have finished to study the case where there exists a $\lambda$ that appears a cofinite number of times on the diagonal.

\subsubsection{There exists no $\lambda \in \mathbb{C}$, such that $\lambda$ appears cofinitely often on the diagonal}

In this case, we show that the operator $T$ possesses a closed invariant half-space.

\begin{prop}
If there exists no $\lambda \in \mathbb{C}$ that appears cofinitely often on the diagonal then $T$ possesses a closed invariant half-space.
\end{prop}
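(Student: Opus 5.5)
The plan is to build the invariant half-space as an inverse limit of spectral subspaces of the finite truncations of $T$. As above, we work with the lower triangular matrix with diagonal $(\lambda_l)_l$ and entries $\alpha_{m,j}$ below the diagonal.

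\textbf{Step 1: choose a set of eigenvalues.} First choose a set $\Lambda\subseteq\mathbb{C}$ such that
\[J=\{l\geqslant 0 : \lambda_l\in\Lambda\}\]
is both infinite and coinfinite.
\begin{itemize}
\item If some $\lambda$ appears infinitely often on the diagonal, take $\Lambda=\{\lambda\}$. By hypothesis $\lambda$ does not appear cofinitely often, so $J^c$ is infinite.
\item Otherwise every value appears only finitely many times, so infinitely many distinct values occur. Split them into two infinite families and let $\Lambda$ be one of them.
\end{itemize}

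\textbf{Step 2: truncations and their spectral subspaces.} For $k\geqslant 0$, let $T_k$ be the $(k+1)\times(k+1)$ upper-left block of the matrix, acting on $\operatorname{span}\{e_0,\ldots,e_k\}$. Since the matrix is lower triangular, $P_{\{0,\ldots,k\}}T=T_kP_{\{0,\ldots,k\}}$, and $P_{\{0,\ldots,k\}}$ intertwines $T_{k+1}$ with $T_k$. Let $E_k$ be the sum of the generalized eigenspaces of $T_k$ for the eigenvalues lying in $\Lambda$. This set of eigenvalues is finite, so $E_k$ is the range of a spectral projection which is a polynomial in $T_k$. Since $\sigma(T_k)=\{\lambda_0,\ldots,\lambda_k\}$ with multiplicities, we have $\dim E_k=|J\cap[0,k]|$. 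Define
\[M=\{x\in\omega : P_{\{0,\ldots,k\}}x\in E_k \text{ for all } k\geqslant 0\}.\]
$M$ is closed, being an intersection of preimages of subspaces under continuous maps. It is $T$-invariant because $E_k$ is $T_k$-invariant and $P_{\{0,\ldots,k\}}T=T_kP_{\{0,\ldots,k\}}$.

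\textbf{Step 3: infinite codimension.} Since $P_{\{0,\ldots,k\}}(M)\subseteq E_k$, we get
\[k+1-\dim P_{\{0,\ldots,k\}}(M)\geqslant |J^c\cap[0,k]|\longrightarrow+\infty .\]
Lemma~\ref{fait}, applied with $n_k=k+1$, then shows that $M$ has infinite codimension.

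\textbf{Step 4: infinite dimension (the main obstacle).} It suffices to produce, for each $n\in J$, a vector of $M$ of valuation $n$; since $J$ is infinite, this gives infinitely many valuations. The key point is that $P_{\{0,\ldots,k\}}$ maps $E_{k+1}$ onto $E_k$. It maps $E_{k+1}$ into $E_k$ because the spectral projections are polynomials in the truncations and therefore commute with the intertwining map. It is onto because $P_{\{0,\ldots,k\}}$ is surjective and maps each generalized eigenspace of $T_{k+1}$ into the corresponding one of $T_k$. Surjectivity onto $E_k$ then follows by decomposing a preimage along the generalized eigenspaces of $T_{k+1}$.

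Now fix $n\in J$. The vector $e_n$ is an eigenvector of $T_n$ for the eigenvalue $\lambda_n\in\Lambda$, because $e_n$ is the last column of a lower triangular block. Hence $e_n\in E_n$. We lift successively, choosing $x_{k+1}\in E_{k+1}$ with $P_{\{0,\ldots,k\}}x_{k+1}=x_k$ for each $k\geqslant n$. These lifts are compatible, so they define a vector $x\in\omega$ with $P_{\{0,\ldots,k\}}x=x_k$ for every $k\geqslant n$.

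It remains to check the truncations with $k<n$ and the valuation.
\begin{itemize}
\item For $k<n$ we have $P_{\{0,\ldots,k\}}x=0\in E_k$, since all coordinates of $x$ of index below $n$ vanish. Hence $x\in M$.
\item The lifts never change the coordinates of index at most $n$, so $\operatorname{val}(x)=n$.
\end{itemize}

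Therefore $M$ is infinite-dimensional, and it is a closed invariant half-space for $T$.

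The delicate part is the compatibility in Step 4, namely that the truncation maps send $E_{k+1}$ onto $E_k$. I expect this to follow from the fact that spectral projections are polynomials in $T_k$ combined with the intertwining relation, but it must be stated with care.
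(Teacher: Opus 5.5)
Your proof is correct, and it takes a genuinely different route from the paper's.

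The paper splits into two cases.
\begin{itemize}
\item When some value (normalized to $0$) appears infinitely but not cofinitely often, it first handles the situation where $\bigcup_n \ker T^n$ contains vectors of arbitrarily large valuation. It then restricts to a tail and builds a single vector $x$ coordinate by coordinate, tracking the sequences $(\nu_j)$ and $(n_j)$, so that the cyclic subspace $\overline{\operatorname{span}}\{T^nx\}$ is a half-space.
\item When every value appears finitely often, it inverts $T-\lambda_i$ on the tail beyond the last occurrence $k_i$ of $\lambda_i$. This produces eigenvectors of valuation $k_i$, and it takes their span.
\end{itemize}

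You treat both cases at once. After choosing $\Lambda$ with $J$ infinite and coinfinite, your $M$ is the inverse limit of the $\Lambda$-spectral subspaces $E_k$ of the truncations $T_k$. These truncations are compatible precisely because the matrix is lower triangular. Closedness and invariance are then immediate. You control $P_{\{0,\ldots,k\}}(M)\subseteq E_k$ with $\dim E_k=|J\cap[0,k]|$; your lifting argument even gives equality. So Lemma~\ref{fait} applies without any inductive bookkeeping.

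The only nontrivial input is that $P_{\{0,\ldots,k\}}$ maps $E_{k+1}$ onto $E_k$. Your argument for this is sound: decompose $\operatorname{span}\{e_0,\ldots,e_{k+1}\}=E_{k+1}\oplus F_{k+1}$ and use $E_k\cap F_k=\{0\}$. For the ``into'' part, the cleanest justification is the identity $P_{\{0,\ldots,k\}}(T_{k+1}-\mu)^m=(T_k-\mu)^mP_{\{0,\ldots,k\}}$. This avoids your appeal to spectral polynomials, which would require checking that the polynomial for $T_{k+1}$ also serves for $T_k$. That check does hold, since the minimal polynomial of $T_k$ divides that of $T_{k+1}$, but it is unnecessary.

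What the paper's route buys is more explicit structure for the half-space: a cyclic subspace in the first case, a span of eigenvectors in the second. The price is a case analysis and a delicate coordinatewise construction that your uniform spectral argument avoids entirely.
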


\begin{proof}
We divide the proof into two cases.\\

\textbf{First case} : There exists some $\lambda$ such that $\lambda$ appears infinitely often on the diagonal.\\

Even if it means considering $T - \lambda \operatorname{Id}$, we can assume that there are infinitely many zeros on the diagonal but also infinitely many nonzero elements on the diagonal.

As in the proof of Proposition~\ref{cupkernel}, we first remark that if the set $\bigcup_{n\geqslant 1} \ker (T^n)$ contains elements with arbitrarily big valuation, then $T$ possesses a closed invariant half-space.

We will thus now assume that there exists $N$ such that for all nonzero $x$ in $\omega$, if $\text{val}(x)\geqslant N$, then $T^jx\ne 0$ for every $j\geqslant 0$. 
Moreover, even if it means considering $T$ on $\overline{\text{span}}\{e_k:k \geqslant N\}$, we can assume that $N=0$ and we can even assume that $\lambda_0=0$.

The idea is to build a nonzero vector $x \in \omega$, a non-decreasing sequence $(\nu_j)_{j \geqslant 0}$ and an increasing sequence $(n_j)_{j \geqslant 0}$ such that for all $j \geqslant 0$: 
\begin{itemize}
    \item[$\bullet$] $\nu_j \leqslant |\{l \in \{0,\ldots,j\} \ ; \ \lambda_{n_l} = 0\}|$;
    \item[$\bullet$] for all $n \geqslant \nu_j$, $\operatorname{val}(T^n x) > n_j$.
\end{itemize}
Let $j \geqslant 0$. 
The second condition will be checked as soon as we have that $\operatorname{val}(T^{\nu_j} x) > n_j$ since $T$ is given by a lower triangular matrix. Such a sequence $(T^n x)_{n \geqslant 0}$ will then allow us to build a closed invariant half-space.

We have that 
$$ 
Te_0=\begin{pmatrix}
    0 & 0 & 0 & \cdots \cr
    \alpha_{1,0} & \lambda_1 & 0 & \cdots \cr
    \alpha_{2,0} & \alpha_{2,1} &\lambda_2 & \cdots \cr
    \vdots & \vdots & \vdots & \ddots  
\end{pmatrix} 
\cdot e_0
= \begin{pmatrix}
    0 \cr
    \alpha_{1,0} \cr
    \alpha_{2,0} \cr
    \vdots 
\end{pmatrix}
$$
and thus $P_0 T(e_0)$ = 0.
Knowing that $e_0$ does not belong to $\operatorname{Ker}T$, there exists $n \geqslant 1$ such that $\alpha_{n,0} \neq 0$.
Consider the first $n_1 \geqslant 1$ such that $\alpha_{n_1,0} \neq 0$. 
We fix $x_{0}=1$ and $x_1= \ldots = x_{n_1-1} = 0$.
We also fix $\nu_0 = 1$, $n_0 = 0$ so that $\nu_0 \leqslant |\{l \in \{0\} \ ; \ \lambda_{n_l} = 0\}|$ and $\operatorname{val}(T^{\nu_0}x) > n_{0}$.
We now want to fix $x_{n_1}$ and $\nu_1$ such that $P_{\{0, \ldots, n_1\}}T^{\nu_1}(x_0, \ldots, x_{n_1}, 0, \ldots) = 0$ with $\nu_{1} \leqslant |\{l \in \{0,1\} \ ; \ \lambda_{n_l} = 0\}|$.
We have that
$$
\begin{pmatrix}
    0 & 0 & 0 & \cdots & \cdots \cr
    0 & \lambda_1 & 0 & \cdots & \cdots \cr
    \vdots & \vdots & \ddots & \cdots \cr
    0 & \cdots & \cdots & \lambda_{n_1-1} & \cdots \cr
    \alpha_{n_1,0} & \cdots & \cdots & \cdots & \lambda_{n_1}
\end{pmatrix}
\cdot \begin{pmatrix}
    x_{0} \cr
    x_1 \cr
    \vdots \cr
    x_{n_1-1} \cr
    x_{n_1} 
\end{pmatrix}
=
\begin{pmatrix}
    0 \cr
    0 \cr
    \vdots \cr
    0 \cr
    \alpha_{n_1,0} + \lambda_{n_1} x_{n_1}  
\end{pmatrix}
.
$$
If $\lambda_{n_1} \neq 0$, we fix $x_{n_1} = -\frac{\alpha_{n_1,0}}{\lambda_{n_1}}$ so that $P_{\{0, \ldots, n_1\}}T^{\nu_0}(x) = 0$ and we can then consider $\nu_1 = 1$. On the other hand, if $\lambda_{n_1} = 0$ then we fix 
$x_{n_1}=0$ and by applying $T$ a second time, we have that 
$$
\begin{pmatrix}
    0 & 0 & 0 & \cdots & \cdots \cr
    0 & \lambda_1 & 0 & \cdots & \cdots \cr
    \vdots & \vdots & \ddots & \cdots \cr
    0 & \cdots & \cdots & \lambda_{n_1-1} & \cdots \cr
    \alpha_{n_1,0} & \cdots & \cdots & \cdots & \lambda_{n_1}
\end{pmatrix}
\cdot \begin{pmatrix}
    0 \cr
    0 \cr
    \vdots \cr
    0 \cr
    \alpha_{n_1,1}  
\end{pmatrix}
= \begin{pmatrix}
    0 \cr
    0 \cr
    \vdots \cr
    0 \cr
    0  
\end{pmatrix}
.
$$
In this case, we can then consider $\nu_1 = 2$.

Suppose that we have fixed the first $n_j$ coordinates of $x$ and suppose that we have $\nu_{j} \leqslant |\{l \in \{0,\ldots,j\} \ ; \ \lambda_{n_l} = 0\}|$ and $\operatorname{val}(T^{\nu_j}x) > n_j$.

Given that $(x_0, \ldots, x_{n_j}, 0, \ldots)$ cannot belong to the kernel of $T^{\nu_j}$, the vector $T^{\nu_j}(x_0, \ldots, x_{n_j}, 0, \ldots)$ has a nonzero coordinate.
We consider the first $n_{j+1} \geqslant 0$ such that $P_{n_{j+1}}T^{\nu_j}(x_0, \ldots, x_{n_j}, 0, \ldots) \neq 0$. Since we know that the first $n_j$ coordinates of $T^{\nu_j}x$ are zero, we deduce that $n_{j+1}>n_j$. 
We then fix $x_{n_j + 1} = \ldots = x_{n_{j+1} -1} = 0$ and we want to fix $x_{n_{j+1}}$ and $\nu_{j+1}$ such that $P_{\{0, \ldots, n_{j+1}\}}T^{\nu_{j+1}} (x_0, \ldots,\\ x_{n_{j+1}}, 0, \ldots) = 0$ with $\nu_{j+1} \leqslant |\{l \in \{0,\ldots,j+1\} \ ; \ \lambda_{n_l} = 0\}|$.

As previously, we get that the $(n_{j+1})$th coordinate of $T^{\nu_j} x$ is given by
$$
P_{n_{j+1}}T^{\nu_j}(P_{\{0,\ldots,n_j\}} x) + \lambda_{n_{j+1}}^{\nu_j} x_{n_{j+1}}.
$$
If $\lambda_{n_{j+1}} \neq 0$, then we fix $x_{n_{j+1}}$ such that $P_{n_{j+1}}T^{\nu_j}(P_{\{0,\ldots,n_j\}} x) + \lambda_{n_{j+1}}^{\nu_j} x_{n_{j+1}} = 0$ and we consider $\nu_{j+1} = \nu_j$.
On the other hand, if $\lambda_{n_{j+1}} = 0$ then we fix $x_{n_{j+1}}=0$ and by applying $T$ one more time, we get that $\text{val}(T^{\nu_j +1}x)>n_{j+1}$ and we can thus consider $\nu_{j+1} = \nu_j + 1$.

It remains to remark that the sequence $(\nu_k)_{k \geqslant 0}$ cannot be ultimately constant because if it was the case, the vector $x$ will belong to the kernel of $T^{\nu}$ (where $\nu$ is the limit of the sequence $(\nu_k)_{k \geqslant 0}$) which is a contradiction. Therefore if we consider the closed invariant subspace $M = \overline{\operatorname{span}}\{T^n x \ ; \ n \geqslant 0\}$ then since $(\nu_k)_{k \geqslant 0}$ is not ultimately constant, $M$ is an infinitely dimensional subspace.
Moreover, we deduce from Lemma \ref{fait} that $M$ has an infinite codimension.
Indeed, we have $|\{n \ ; \ \operatorname{val}(T^n x) \leqslant n_j\}| \leqslant \nu_{j}$.
Then, 
$$
\operatorname{dim}(P_{\{0,\ldots,n_j\}}(M)) = \operatorname{dim}(\operatorname{span}\{T^n x \ ; \ \operatorname{val}(T^nx) \leqslant n_j\}) \leqslant \nu_j.
$$
Moreover, we know that $\nu_j \leqslant |\{l \in \{0,\ldots,j\} \ ; \ \lambda_{n_l} = 0\}|$. 
Then, 
$$
n_j + 1 - \operatorname{dim}(P_{\{0,\ldots,n_j\}}(M)) \geqslant |\{l \in \{0, \ldots,n_j\} \ ; \ \lambda_{l} \neq 0\}| \underset{j \rightarrow + \infty}{\longrightarrow} +\infty
$$
because there is an infinite number of $l$ such that $\lambda_{l} \neq 0$.\\

\textbf{Second case :} Each $\lambda$ on the diagonal only appears finitely many times.\\

In this case, we deduce that there are infinitely many $\lambda$ such that $\lambda$ appears finitely often on the diagonal. We can thus find a sequence $(\lambda_i)_{i \geqslant 0}$ of elements appearing on the diagonal and a sequence $(k_i)_{i \geqslant 0}$ such that for all $i \geqslant 0$, $k_i$ is the index of the last appearance of $\lambda_i$ on the diagonal and such that the sequence $(k_i)_{i \geqslant 0}$ is strictly increasing.

Let $\lambda \in \{\lambda_i : i \geqslant 0\}$ and $k$ the last appearance of $\lambda$ on the diagonal. 
We have that 
\begin{align*}
&Te_k =
\begin{pmatrix}
    \lambda_0 & 0 & 0 & \cdots \cr
    \vdots & \ddots & 0 & \cdots \cr
     & \cdots & \lambda_{k} & \cdots \cr
     & \cdots & \alpha_{k+1,k} & \cdots \cr
    \vdots & \vdots & \vdots & \ddots 
\end{pmatrix}
\cdot \begin{pmatrix}
    0 \cr
    \vdots \cr
    0 \cr
    1 \cr
    0 \cr
    \vdots \cr
    0
\end{pmatrix} \\
&= (0,\ldots,\underset{k-1}{0}, \lambda_k, \alpha_{k+1,k}, \ldots). 
\end{align*}
So, 
$$
(T- \lambda \operatorname{Id} )e_k =  (0,\ldots,\underset{k}{0},\alpha_{k+1,k},\alpha_{k+2,k},\ldots).
$$
For all $l \geqslant k+1$, we have that $\lambda_l - \lambda \neq 0$ and the operator $T-\lambda \operatorname{Id}$ is thus invertible from $\overline{\text{span}}\{e_n:n\geqslant k+1\}$ to $\overline{\text{span}}\{e_n:n\geqslant k+1\}$. In other words, there exists $x \in \overline{\text{span}}\{e_n:n\geqslant k+1\}$ such that $(T-\lambda \operatorname{Id} ) x= -(0,\ldots,\underset{k}{0},\alpha_{k+1,k},\alpha_{k+2,k},\ldots)$ and thus such that $e_k+x\in \ker(T- \lambda \operatorname{Id} )$. In conclusion, for each $i \geqslant 1$, the operator $T$ possesses an eigenvector with valuation equal to $k_i$ and we can thus easily construct a closed invariant half-space for $T$ by using them.
\end{proof}

\section{Consequences}\label{SecFinal}

\subsection{Remarks on the theorem}

We will now finish the proof of Theorem \ref{mainthm} by showing that thanks to the previous characterization, we have that an operator $T \in L(\omega)$ has a closed invariant half-space if and only if $T$ has a closed almost-invariant half-space.

\begin{prop}
Let $T \in L(\omega)$.
Then $T$ has a closed invariant half-space if and only if $T$ has a closed almost-invariant half-space.
\end{prop}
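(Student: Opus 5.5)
One direction is immediate: if $M$ is a closed invariant half-space for $T$, then $M$ is invariant for $T+R$ with $R=0$, so $M$ is a closed almost-invariant half-space. For the converse we argue by contraposition, using the characterization obtained in Section~\ref{SecProof}.

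Suppose $T$ does not possess a closed invariant half-space. By the results of Section~\ref{SecProof} (the theorem at the end of Subsection~\ref{SubSecInf} together with the propositions of Subsection~\ref{SubSecFin}), we are in one of two situations. Either some set $C_k$ is infinite, and then $T$ is conjugate to $B+\lambda\operatorname{Id}+R$. Or every $C_k$ is finite, and then $T$ is conjugate to a lower triangular matrix; in this case the only branch without a closed invariant half-space is Proposition~\ref{conj à F+R+lambda}, so $T$ is conjugate to $F+\lambda\operatorname{Id}+R$. In both cases $R$ is a finite rank operator and $\lambda\in\mathbb{C}$. Hence statement $2.$ implies statement $3.$ of Theorem~\ref{mainthm}.

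It remains to apply the theorem at the end of Section~\ref{SecFB}: an operator conjugate to $B+\lambda\operatorname{Id}+R$ or $F+\lambda\operatorname{Id}+R$ has no closed almost-invariant half-space. Recall why: if $T=P^{-1}(S+R)P$ with $S\in\{B+\lambda\operatorname{Id},\,F+\lambda\operatorname{Id}\}$, then for any finite rank $R'$,
\[
T+R'=P^{-1}\bigl(S+R+PR'P^{-1}\bigr)P,
\]
and $PR'P^{-1}$ is again of finite rank. Propositions~\ref{propF} and~\ref{PropB}, together with Proposition~\ref{conj preserve DEI}, then show that $T+R'$ has no closed invariant half-space. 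Therefore $T$ has no closed almost-invariant half-space, which proves the converse.

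The only point needing care is the case exhaustion in the middle step: we must check that the branches of Section~\ref{SecProof} really cover all operators. If $A$ is infinite, either $A$ is not cofinite, or $A$ is cofinite and we reduce to $|A_l|\geqslant 1$ for all $l$, after which either infinitely many $|A_l|\geqslant 2$ or eventually $|A_l|=1$. If $A$ is finite for every $k$, the diagonal is split according to cofinite, infinite, or finite occurrences of each value, and the cofinite case is split by the subdiagonal. In every branch other than the two conjugacy conclusions, a closed invariant half-space is produced, so this case analysis is exhaustive.
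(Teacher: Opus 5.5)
Your proof is correct and follows the paper's argument. Both use the implication $2.\Rightarrow 3.$ from Section~\ref{SecProof}, then observe that $T+R'$ is again conjugate to $B+\lambda\operatorname{Id}$ or $F+\lambda\operatorname{Id}$ plus a finite rank operator, so Propositions~\ref{propF} and~\ref{PropB} rule out a closed invariant half-space for every finite-rank perturbation; your explicit trivial direction and the re-check of the case exhaustion are details the paper leaves implicit.
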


\begin{proof}
Let $T \in L(\omega)$ be an operator without a closed invariant half-space and $R \in L(\omega)$ be a finite rank operator.
By the equivalence between $2.$ and $3.$ in Theorem~\ref{mainthm}, we know that $T$ is conjugate to $S + R' + \lambda \operatorname{Id}$ for some finite rank operator $R'$ and some $\lambda \in \mathbb{C}$ where $S = B$ or $F$.
It follows that $T+R$ is conjugate to $S + R'' + \lambda \operatorname{Id}$ where $R''$ is also a finite rank operator.
Then, again by the equivalence between $2.$ and $3.$ in Theorem \ref{mainthm}, we can deduce that $T+R$ does not possess a closed invariant half-space and thus $T$ does not possess a closed almost-invariant half-space.
\end{proof}

This result allows us to get the equivalence between $1.$ and $2.$ in Theorem \ref{mainthm} and to conclude the proof of Theorem~\ref{mainthm}.\\


In view of Remark~\ref{no Id for B}, one might wonder if it was possible to state Theorem~\ref{mainthm} more succinctly. However, excepted the fact mentioned in Remark~\ref{no Id for B}, we cannot simplify the statement of Theorem~\ref{mainthm}. Indeed we have that 
    \begin{itemize}
        \item $F+ \lambda \operatorname{Id}$ with $\lambda \neq 0$ is not conjugate to $F + R$ with $R \in L(\omega)$ a finite rank operator.
        As a consequence, $F+ \lambda \operatorname{Id}$ is not conjugate to $F + \mu \operatorname{Id}$ if $\lambda \neq \mu$.
    \end{itemize}
Indeed, first of all, we know by Proposition \ref{rang fini,colonnes} that $R$ has a finite number of nonzero columns.
Assume that $F+ \lambda \operatorname{Id} = P^{-1}(F+R) P$ for $P \in L(\omega)$ an invertible operator.
We can then consider $n_0$ such that $Re_n = 0$ for all $n \geqslant n_0$ and we let $u_n=P^{-1}(e_n)$ for every $n \geqslant n_0$.
We know by continuity of $P^{-1}$ that $(\operatorname{val}(u_n))_{n \geqslant n_0}$ tends to infinity. 
We can then find $n \geqslant n_0$ such that $\operatorname{val}(u_n) < \operatorname{val}(u_{n+1})$ so that
$$
\operatorname{val}(P^{-1}(F+R) P(u_n)) = \operatorname{val}(P^{-1}(Fe_n)) = \operatorname{val}(P^{-1}(e_{n+1})) = \operatorname{val}(u_{n+1}) 
$$
and
$$
\operatorname{val}((F+ \lambda \operatorname{Id})u_n) = \operatorname{val}(F u_n + \lambda u_n)=\operatorname{val}(u_n).
$$
It is then impossible to have that $P^{-1}(F+R) P(u_n) = (F+ \lambda \operatorname{Id})u_n$.\\

Moreover, we have that

\begin{itemize}
    \item $B$ is not conjugate to $F + \lambda \operatorname{Id}+ R$ with $\lambda \in \mathbb{C}$ and $R\in L(\omega)$ a finite rank operator.
\end{itemize}

Assume that $B = P^{-1}(F+\lambda \operatorname{Id}+ R) P$ for $P \in L(\omega)$ an invertible operator.
We can then as previously consider $n_0$ such that $Re_n = 0$ for all $n \geqslant n_0$. Let $u_n=P^{-1}(e_n)$ for every $n \geqslant n_0$. Then by continuity of $P^{-1}$, we know that $(\operatorname{val}(u_n))_{n \geqslant n_0}$ tends to infinity and  
we can thus find $n \geqslant n_0$ such that $1 \leqslant \operatorname{val}(u_n) < \operatorname{val}(u_{n+1})$. It follows that
$$
\operatorname{val}(P^{-1}(F+\lambda \operatorname{Id}+R) P(u_n)) = \operatorname{val}(P^{-1}(Fe_n+\lambda e_n)) = \operatorname{val}(u_{n+1}+\lambda u_n) \geqslant \operatorname{val}(u_{n}) 
$$
and
$$
\operatorname{val}(B u_n) = \operatorname{val}(u_n)-1.
$$
It is then impossible to have that $P^{-1}(F+\lambda \operatorname{Id}+R) P = B$.\\




\subsection{Some particular examples}
We will see some particular examples of operators defined on $\omega$ to illustrate the different cases in the proof of Theorem~\ref{mainthm}.

First of all, we will look at the operators $T$ defined by $\sum_{k \geqslant 0} a_k F^k$ with $(a_k)_{k \geqslant 0} \subseteq \mathbb{C}$.
Then, in this case, we have that the matrix of $T$ is given by 
$$
\begin{pmatrix}
    a_0 & 0 & 0 & \cdots \cr
    a_1 & a_0 & 0 & \cdots \cr
    a_2 & a_1 & a_0 & \cdots \cr
    \vdots & \vdots & \ddots & \ddots  
\end{pmatrix}. 
$$
The existence of closed invariant half-spaces for this operator is equivalent to the existence of closed invariant half-spaces for $T - a_0 \operatorname{Id}$.
Then, we have two cases.
If $a_1 = 0$, then we are in the case where there is a cofinite number of $0$ on the diagonal and there is an infinite number of $0$ on the lower diagonal, then we can conclude by Proposition \ref{cupkernel} that $T - a_0 \operatorname{Id}$, and thus also $T$ has a closed invariant half-space. On the other hand, if $a_1 \ne 0$, we are in the case where there is a cofinite number of $0$ on the diagonal and there is a cofinite number of nonzero elements on the lower diagonal, then we can conclude by Proposition \ref{conj à F+R+lambda} that $T$ is conjugate to $F + a_0 \operatorname{Id}$.

We can also consider the operator $T$ defined by $\sum_{k \geqslant 0} b_k B^k$ with $(b_k)_{k \geqslant 0} \subseteq \mathbb{C}$.
Then, in this case, we have that the matrix of $T$ is given by 
$$
\begin{pmatrix}
    b_0 & b_1 & b_2 & \cdots \cr
    0 & b_0 & b_1 & \cdots \cr
    0 & 0 & b_0 & \cdots \cr
    \vdots & \vdots & \vdots & \ddots  
\end{pmatrix}. 
$$
We can first remark that to have the continuity of the operator $T$, we must have that there is only a finite number of nonzero $b_k$.
Otherwise, we would have that there exists an infinite family of indices $m \geqslant 1$ such that $P_0 T(e_m) \neq 0$.
We have already seen in the proof of Proposition \ref{rang fini,colonnes} that this is impossible. We can in fact deduce from the following result that such an operator $T$ does not possess a closed invariant half-space if and only if $b_1\ne 0$ and $b_k=0$ for every $k\geqslant 2$.

\begin{thm}
Let $T=\sum_{k\geqslant 0}a_k F^k+\sum_{k=1}^d b_k B^k$.
Then $T$ does not possess a closed invariant half-space if and only if one of the following conditions is satisfied:
\begin{itemize}
\item $a_1\ne 0$ and $b_k=0$ for every $1 \leqslant k \leqslant d$;
\item $b_1\ne 0$ and $b_k=0$ for every $2 \leqslant k \leqslant d$.
\end{itemize}
\end{thm}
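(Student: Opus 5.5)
We split according to the largest index $d'\in\{0,\ldots,d\}$ with $b_{d'}\neq 0$, where $d'=0$ means that all $b_k$ vanish. Translating by $a_0\operatorname{Id}$ changes nothing, since $S$ has a closed invariant half-space if and only if $S+\lambda\operatorname{Id}$ does. So we may assume $a_0=0$ whenever convenient.

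\textbf{Case $d'=0$.} Here $T=\sum_{k\geqslant 0}a_kF^k$, which is exactly the first example treated above. If $a_1\neq 0$, Proposition~\ref{conj à F+R+lambda} shows that $T$ is conjugate to $F+a_0\operatorname{Id}$, so by Proposition~\ref{propF} it has no closed invariant half-space. If $a_1=0$, the lower diagonal of $T-a_0\operatorname{Id}$ is identically zero, and Proposition~\ref{cupkernel} provides a closed invariant half-space. This is precisely the first bullet of the statement.

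\textbf{Case $d'=1$.} The matrix of $T-a_0\operatorname{Id}$ has $b_1\neq 0$ on the superdiagonal, zeros strictly above it, and arbitrary entries on and below the diagonal. Each application of $T$ moves support down by at most one index. Hence $P_0T^le_n=0$ for $l<n$, while $P_0T^ne_n=b_1^n\neq 0$. Therefore $A_l=\{l\}$ for every $l\geqslant 1$, and $A=\mathbb{Z}_+\setminus\{0\}$ is cofinite. Proposition~\ref{|A_l| = 1} then shows that $T$ is conjugate to $B+R+\lambda\operatorname{Id}$ with $R$ of finite rank, and Proposition~\ref{PropB} shows there is no closed invariant half-space. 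This gives the second bullet.

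\textbf{Case $d'\geqslant 2$.} This is the main step: we must exhibit a closed invariant half-space even though $A$ is cofinite, so Proposition~\ref{notcofinite} does not apply. Mimicking Proposition~\ref{|A_l| >= 2}, take
\[M=\{x\in\omega : P_0T^lx=0 \text{ for all } l\geqslant 0\}.\]
It is closed and $T$-invariant. Since $T$ moves support down by at most $d'$ per step, we have
\[P_0T^le_n=0 \text{ if } n>d'l, \qquad P_0T^le_{d'l}=b_{d'}^l\neq 0.\]
So the functionals $x\mapsto P_0T^lx$ are linearly independent (test them on $e_{d'l}$), and $M$ has infinite codimension.

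For infinite dimension, fix $m\geqslant 1$ not divisible by $d'$; there are infinitely many such $m$ because $d'\geqslant 2$. Look for
\[v_m=e_m+\sum_{j\geqslant 1}\beta_je_{d'j}.\]
Choose the $\beta_l$ recursively in $l$ so that $P_0T^lv_m=0$. The $l$-th equation involves only $e_m$ and $\beta_1,\ldots,\beta_l$, since terms with $j>l$ contribute nothing. The coefficient of $\beta_l$ is $b_{d'}^l\neq 0$, so this triangular system is solvable, and the resulting series converges in $\omega$. The equation for $l=0$ holds automatically because $m\neq 0$. The vectors $v_m$ lie in $M$. Coordinate $m$ is nonzero only in $v_m$, since the other vectors are supported on $\{m'\}\cup d'\mathbb{Z}_+$ with $m'\neq m$ and $m\notin d'\mathbb{Z}_+$. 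Hence the $v_m$ are linearly independent, $M$ is infinite-dimensional, and $M$ is a closed invariant half-space.

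The three cases together give the equivalence. The only delicate point is verifying the triangularity claim $P_0T^le_{d'j}=0$ for $j>l$, which uses that every monomial in $F$ and $B$ appearing in $T$ lowers indices by at most $d'$.
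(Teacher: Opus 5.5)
Your proof is correct. For $d'=0$ and $d'=1$ it follows the paper's argument: the Toeplitz case $\sum a_kF^k$ is handled by the lower-triangular propositions, and $A_l=\{l\}$ is fed into Proposition~\ref{|A_l| = 1}. For $d'\geqslant 2$ you take a genuinely different and shorter route.

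The paper splits this case three ways:
\begin{itemize}
\item if some $b_k\neq 0$ with $k<d$, it shows $ld,(l-1)d+k\in A_l$ and applies Proposition~\ref{|A_l| >= 2};
\item if only $b_d$ survives and every nonzero $a_k$ has $k\in d\mathbb{Z}_+$, it notes that $A$ is not cofinite and applies Proposition~\ref{notcofinite};
\item otherwise it does a fairly delicate computation showing $(l-1)d-k\in A_l$ for large $l$, again to invoke Proposition~\ref{|A_l| >= 2}.
\end{itemize}

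You bypass all of this by working directly with $M=\bigcap_l\ker P_0T^l$ and using only the single chain $d'l\in A_l$. The triangularity $P_0T^le_{d'j}=0$ for $j>l$ and $P_0T^le_{d'l}=b_{d'}^l\neq 0$ does two jobs at once. It gives the independence of the annihilating functionals. It also lets you solve for $v_m=e_m+\sum_j\beta_je_{d'j}\in M$ for each $m\notin d'\mathbb{Z}_+$, and these are independent because coordinate $m$ isolates $v_m$.

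In effect, your criterion is that some selection $n_l\in A_l$ (one for every $l$) omits infinitely many indices. Here it covers both the paper's non-cofinite sub-case (where your $v_m$ is simply $e_m$) and its $|A_l|\geqslant 2$ sub-cases. It is also much easier to check than finding a second element of each $A_l$, which is exactly where the paper's last sub-case is laborious.

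The paper's route costs more, but it deliberately presents the example as an instance of the separate branches of the main proof. That is the stated purpose of that subsection.
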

\begin{proof}
First of all, if for all $1 \leqslant k \leqslant d$, $b_k = 0$, then $T = \sum_{k\geqslant 0}a_k F^k$.
As a consequence, we already know that in this case, $T$ does not possess a closed invariant half-space if and only if $a_1 \neq 0$.
 
We can now suppose that there exists at least one $1 \leqslant k \leqslant d$ such that $b_k \neq 0$.
If $b_1\ne 0$ and $b_k=0$ for every $2 \leqslant k \leqslant d$, we have that $T = b_1 B + \sum_{k\geqslant 0}a_k F^k$.
In this case, we have that for all $l \geqslant 1$, $A_l=\{l\}$ and thus $|A_l| = 1$. 
By Proposition~\ref{|A_l| = 1}, we can conclude that $T$ does not possess a closed invariant half-space.

To conclude the proof, it remains to show that if there exists $2 \leqslant m \leqslant d$, such that $b_m \neq 0$, then $T$ possesses a closed invariant half-space.
We consider the largest $m$ such that $b_m \neq 0$ and without loss of generality, we can then suppose that $m=d$.
We can first notice that for all $l\geqslant 1$, if $n > ld$ then $P_0 T^l e_n = 0$ and for all $l \geqslant 1$, $ld \in A_l$ and $P_0 T^l e_{ld} = b_d^l$.

We now consider different cases. 
We first assume that there exists $k < d$ such that $b_k \neq 0$. 
In this case, it follows that $T = b_d B^d + b_k B^k + \sum_{j=1}^{k-1}b_j B^j + \sum_{j\geqslant 0}a_{j} F^{j}$ and that for all $l \geqslant 1$, $(l-1)d + k \in A_l$ and $P_0T^l e_{(l-1)d+k} = lb_d^{l-1}b_k$.
We can then conclude that $|A_l| \geqslant 2$ for all $l \geqslant 1$ and thus, by Proposition \ref{|A_l| >= 2}, $T$ has a closed invariant half-space.

Assume now that we have $b_k=0$ for all $1\leqslant k<d$ and let us divide this last case into two.
\begin{itemize}
\item Assume that for all $k\geqslant 0$ if $a_k \neq 0$ then $k \in d \mathbb{Z}_{+}$. Then, we can write $T = b_d B^d + \sum_{k\geqslant 0}a_{kd} F^{kd}$ and it follows that if $j \notin d \mathbb{Z}_+$, then $j \notin A$.
Since $A$ is not cofinite, we can conclude thanks to Proposition \ref{notcofinite} that $T$ possesses a closed invariant half-space.

\item Assume that there exists $k \notin d \mathbb{Z}_{+}$ such that $a_k \neq 0$.
Consider the smallest $k \notin d\mathbb{Z}_+$ such that $a_k \neq 0$.
Then, we can write $T = b_d B^d + \sum_{j\geqslant k}a_{j} F^{j} + \sum_{j : jd < k} a_{jd} F^{jd}$.

Let $l_0$ be the smallest integer $l$ such that $(l-1)d \geqslant k$.
We can show that for all $l \geqslant l_0$, $(l-1)d -k \in A_l$. Indeed, we can compute that for every $l\geqslant l_0$, $P_0 T^le_{(l-1)d-k}=(l-l_0+1)a_kb_d^{l-1}\ne 0$ and that for every $l\geqslant l_0$, if $l'< l$ then 
\begin{align*}
P_0T^{l'}e_{(l-1)d-k} &= b_d P_0 T^{l'-1} B^d e_{(l-1)d-k} + a_k P_0 T^{l' -1} e_{(l-1)d} \\
&+ \sum_{j \geqslant k+1} a_j P_0 T^{l' -1} e_{(l-1)d+j-k}\\
&+ \sum_{j : jd < k} a_{jd} P_0 T^{l' -1} e_{(l+j-1)d-k} = 0.
\end{align*}

The second and third terms are clearly equal to zero since $l'<l$. The last term is equal to $0$ because if we want $P_0 T^n e_{(l+j-1)d-k}$ to be nonzero, we have to rely (at least once) on some $F^{k'}$ where $k'$ is not a multiple of $d$ and thus on some $k'\geqslant k$. However, since $(l+j-1)d-k+k'$ is then bigger than $(l-1)d$, it cannot be done by applying $T$ only $l'-2$ times more. Finally the first term is equal to zero for $l=l_0$ by definition of $l_0$ and it will then be equal to zero for every $l\geqslant l_0$ by induction.

Therefore we can conclude that for all $l\geqslant l_0$, $ld$ and  $(l-1)d-k$ belong to $A_l$. It follows that $|A_l| \geqslant 2$ for all $l \geqslant l_0$ and by Proposition \ref{|A_l| >= 2} that $T$ possesses a closed invariant half-space.
\end{itemize}
\end{proof}

\subsection{Case of $X \oplus \omega$ with $X$ a Fréchet space with a continuous norm}

Now that we have thoroughly studied the problem of the almost-invariant half-space for operators defined on $\omega$, we will turn our attention to this problem for operators defined on $X \oplus \omega$ with $X$ a Fréchet space with a continuous norm. 
First of all, let us recall that on $X\oplus \omega$ where $X$ is a Fréchet space with a continuous norm, every bounded operator possesses a non-trivial closed invariant subspace \cite{menet2018invariant}. 
If $X$ is a Fréchet space with a continuous norm, there is thus no difference between $\omega$ and $X\oplus \omega$ for the Invariant Subspace Problem. As shown by Menet, the difference for the Invariant Subspace Problem can only appear if $X$ is a Fréchet space without continuous norm \cite{menet2021invariant}.


The situation is different for the Almost-Invariant Subspace Problem.
Indeed, if $X$ is a Banach space of finite dimension, we have that $X \oplus \omega$ is isomorphic to $\omega$ and we get the same result than for $\omega$. But, if $X$ is an infinite-dimensional Banach space or more generally an infinite-dimensional Fréchet space with a continuous norm then every operator defined on $X \oplus \omega$ has a closed invariant half-space as shown below. 

\begin{thm} \label{X + omega}
Let $X$ be an infinite-dimensional Fréchet space with a continuous norm. 
Then each operator defined on $X \oplus \omega$ possesses a closed almost-invariant half-space.
\end{thm}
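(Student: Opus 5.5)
Write an operator $T$ on $X\oplus\omega$ in block form
\[
T=\begin{pmatrix} T_{XX} & T_{\omega X}\\ T_{X\omega} & T_{\omega\omega}\end{pmatrix},
\]
with $T_{XX}\in L(X)$, $T_{\omega X}\in L(\omega,X)$, $T_{X\omega}\in L(X,\omega)$ and $T_{\omega\omega}\in L(\omega)$. The key observation is that $T_{\omega X}$ has finite rank. Indeed, $X$ has a continuous norm $\|\cdot\|$. By continuity there are $C>0$ and $J\geqslant 0$ with $\|T_{\omega X}y\|\leqslant C p_J(y)$ for all $y\in\omega$. So $T_{\omega X}$ vanishes on $\ker p_J=\{y:\operatorname{val}(y)>J\}$, which has finite codimension. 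Hence $R_1:=\begin{pmatrix}0&T_{\omega X}\\0&0\end{pmatrix}$ is a finite rank operator, and
\[
T-R_1=\begin{pmatrix} T_{XX} & 0\\ T_{X\omega} & T_{\omega\omega}\end{pmatrix}.
\]
This operator leaves $\{0\}\oplus\omega$ invariant.

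The subspace $\{0\}\oplus\omega$ is closed and infinite-dimensional. Since $X$ is infinite-dimensional, it is also of infinite codimension. So $\{0\}\oplus\omega$ is a closed invariant half-space for $T-R_1$, and therefore a closed almost-invariant half-space for $T$. In fact this gives more: every operator on $X\oplus\omega$ has an almost-invariant half-space coming from a single finite rank perturbation.

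For the equivalence in Theorem~\ref{thmX}, the remaining direction uses the finite-dimensional case. If $X$ is finite-dimensional, then $X\oplus\omega\cong\omega$, and the operator corresponding to $F$ has no closed almost-invariant half-space by the results of Section~\ref{SecFB}. Almost-invariance and being a half-space are both preserved under isomorphism, by the same argument as in Proposition~\ref{conj preserve DEI}.

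The only step needing care is the finite rank claim. It relies on the fact that a continuous linear map from $\omega$ into a space with a continuous norm factors through a finite-dimensional quotient $\omega/\ker p_J$. This is exactly where the hypothesis that $X$ has a continuous norm enters. I do not expect any further obstacle.
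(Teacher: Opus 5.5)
Your proof is correct and follows the paper's argument essentially verbatim: the continuous norm on $X$ gives $\|T_{\omega X} v\| \leqslant C p_J(v)$, so $T_{\omega X}$ vanishes on $\ker p_J$ and has finite rank (the paper phrases this as $\operatorname{Im}(T_{\omega X})\subseteq\operatorname{span}\{T_{\omega X}e_0,\ldots,T_{\omega X}e_J\}$). Removing this corner then makes $\{0\}\oplus\omega$ invariant, and it is a half-space because $X$ is infinite-dimensional. Your closing remark on the finite-dimensional case goes beyond the statement but is consistent with how the paper obtains Theorem~\ref{thmX}.
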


\begin{proof}
Let $T$ be an operator defined on $X \oplus \omega$.
We are going to show that $\{0\} + \omega$ is a closed almost-invariant half-space for $T$.
Let us already note that $\{0\} + \omega$ is a half-space since $X$ is infinite-dimensional.
It remains to show that this half space is almost-invariant for $T$.

Let $(\|\cdot\|_j)_{j \geqslant 0}$ be an increasing sequence of norms defining the topology of $X$. 
Let $x + v \in X \oplus \omega$ with $x\in X$ and $v\in \omega$. 
For every $j \geqslant 0$, we fix $q_j(x+v) = \|x\|_j + p_j(v)$.
We then have that the sequence $(q_j)_{j \geqslant 0}$ defines the topology of $X \oplus \omega$ and we can write
$$
T(x+v) = T_{XX} x + T_{\omega X} v + T_{X \omega} x + T_{\omega \omega} v 
$$
where $T_{XX} : X \rightarrow X$, $T_{\omega X} : \omega \rightarrow X$, $T_{X \omega} : X \rightarrow \omega$ and $T_{\omega \omega} : \omega \rightarrow \omega$ are continuous. 

We then remark that $T_{\omega X}$ is a finite rank operator. Indeed, by continuity of $T_{\omega X}$, there exist $C > 0$ and $J \in \mathbb{Z}_+$ such that for all $v \in \omega$, 
$$
\|T_{\omega X} v\|_1 \leqslant C p_J(v), \quad \text{i.e.} \quad \|T_{\omega X} v\|_1 \leqslant C \sup_{0 \leqslant i \leqslant J} |v_i|.
$$ 
This implies that $\operatorname{Im}(T_{\omega X}) \subseteq \operatorname{span}\{T_{\omega X} e_0,\ldots,T_{\omega X} e_{J}\}$ and thus that $T_{\omega X}$ is a finite rank operator. The operator $R:X\oplus \omega\to X\oplus \omega$ defined by $R(x+v)=-T_{\omega X}v$ is then a finite rank operator and for every $0+v \in \{0\}+\omega$, we have that 
$$
(T+R)(0+v) = T_{XX} 0 + T_{\omega X} v + T_{X \omega} 0 + T_{\omega \omega} v- T_{\omega X} v= 0 + T_{\omega \omega} v\in \{0\}+\omega.
$$
The space $\{0\}+\omega$ is thus a closed almost-invariant half-space for $T$.
\end{proof}



\nocite{*}
\printbibliography

\end{document}